\documentclass[11pt,reqno]{amsart}
\usepackage[T1]{fontenc}
\usepackage{lmodern}
\usepackage{amsmath,amssymb,amsthm}
\usepackage[textheight=8.85in,textwidth=6.75in]{geometry}
\usepackage{microtype}
\usepackage[hidelinks]{hyperref}
\hypersetup{
 pdftitle={Almost All Zeros of Dedekind Jensen Polynomials Are Real},
 pdfauthor={Ken Ono},
 pdfsubject={Dedekind Jensen polynomials and their real zeros},
 pdfkeywords={Dedekind zeta-functions, Generalized Riemann hypothesis, Jensen polynomials}
}
\numberwithin{equation}{section}
\newtheorem{theorem}{Theorem}
\newtheorem{proposition}[theorem]{Proposition}
\newtheorem{remark}[theorem]{Remark}
\newtheorem{lemma}[theorem]{Lemma}
\newtheorem{corollary}[theorem]{Corollary}
\DeclareMathOperator{\Log}{Log}
\DeclareMathOperator{\Arg}{Arg}
\renewcommand{\Re}{\operatorname{Re}}
\renewcommand{\Im}{\operatorname{Im}}
\newcommand{\R}{\mathbb R}
\newcommand{\C}{\mathbb C}
\newcommand{\dd}{\,d}

\title[An Asymptotic Jensen--P\'olya Criterion for GRH]{An Asymptotic Jensen--P\'olya Criterion for GRH}
\author{Ken Ono}
\address{Axiom Math\\124 University Avenue\\Palo Alto, CA 94301}
\email{ken@axiommath.ai}
\subjclass[2020]{Primary 11R42; Secondary 11M26, 30C15, 41A60}
\keywords{Dedekind zeta-functions, Riemann hypothesis, Jensen polynomials}

\begin{document}
\begin{abstract}
The Jensen--P\'olya criterion expresses the generalized Riemann
hypothesis (GRH) for Dedekind zeta-functions as a real-rootedness
condition on their Jensen polynomials. Previous asymptotic results
require large shifts, while computations establish real-rootedness
in the Riemann case through a finite range of degrees.
Here we study the unshifted family as the degree tends to infinity.
For every number field $K$, we prove unconditionally that
$J_{K,d}(X)$ is nonzero and has at least
$d-C_K\sqrt{d\log d}$ distinct negative real zeros for all
sufficiently large $d$, where $C_K>0$ depends only on $K$.
Therefore, the proportion of real zeros tends to one with error
$O_K(\sqrt{\log d/d})$. 
The main theorems have been autoformalized in Lean~4 by AxiomProver.
\end{abstract}

\maketitle

\section{Introduction}\label{sec:introduction}

The Riemann Hypothesis (RH) asserts that every nontrivial zero of the
Riemann zeta function $\zeta(s)$ has real part $1/2$. Here $\zeta(s)$
is the meromorphic continuation of $\sum_{m\ge1}m^{-s}$ from
$\Re s>1$, and its nontrivial zeros are those in the strip
$0<\Re s<1$. A classical program originating with Jensen \cite{Jensen} and
P\'olya \cite{Polya} studies RH through the zeros of real
polynomials. Its starting point is the entire xi-function
\begin{equation}\label{eq:xi-definition}
 \xi(s)=\frac12s(s-1)\pi^{-s/2}\Gamma(s/2)\zeta(s),
\end{equation}
where $\Gamma$ is Euler's gamma function. The zeros of $\xi$ are
precisely the nontrivial zeros of $\zeta$, with the same multiplicities.
Since the functional equation $\xi(s)=\xi(1-s)$ makes
$\xi(1/2+w)$ even in $w$, we can collect its even Taylor coefficients
in the entire function
\begin{equation}\label{eq:F-definition}
 F(z)=\xi\!\left(\frac12+\sqrt z\right)
     =\sum_{j=0}^{\infty}\frac{\gamma(j)}{j!}z^j.
\end{equation}
The value of $F$ is independent of the choice of square root, and its
coefficients $\gamma(j)$ are positive, as we recall in
Section~\ref{sec:preliminaries}.

For integers $d\ge1$ and $n\ge0$,
we then define the degree $d$ Jensen polynomial with
shift $n$ by
\begin{equation}\label{eq:shifted-J}
 J^{d,n}(X)=\sum_{j=0}^{d}\binom dj\gamma(n+j)X^j.
\end{equation}
A real polynomial is called \emph{hyperbolic} if all its zeros are real.
Griffin, Rolen, Zagier, and the author \cite{GORZ} proved that,
for each fixed degree $d$, the polynomial $J^{d,n}(X)$ is
hyperbolic for all sufficiently large shifts $n$. Their proof
identifies Hermite polynomials, which are known to be hyperbolic, as limits after suitable
normalization. This result was made effective in
\cite{GORTTW}. The present best result is due to Kim and Lee \cite{KL}. 
For every $\varepsilon>0$, they establish hyperbolicity for
$n\gg_{\varepsilon} d^{1/2+\varepsilon}$. In these results, the bounds allow degree and shift
to grow together, but require the shift to tend to infinity.
Wagner \cite[Theorem~1.3 and Corollary~1.6]{Wagner} established
the corresponding fixed-degree, large-shift hyperbolicity for
Dedekind zeta-functions.

Here we study the unshifted
Jensen polynomials 
\begin{equation}\label{eq:J-definition}
 J_d(X):=J^{d,0}(X)=\sum_{j=0}^{d}\binom dj\gamma(j)X^j.
\end{equation}
Jensen's criterion gives the equivalence
\begin{equation}\label{eq:RH-equivalence}
 \mathrm{RH}\quad\Longleftrightarrow\quad
 J_d\text{ is hyperbolic for every }d\ge1,
\end{equation}
as we recall in Proposition~\ref{prop:Jensen-criterion}. Hyperbolicity
is known through a large finite range of degrees. A criterion of
Chasse \cite{Chasse}, in the form stated in
\cite[Theorem~1.2]{GORTTW}, converts verification of RH for zeros
with $|\Im s|\le H$ into hyperbolicity for $d\le\lfloor H\rfloor^2$.
The computations of Platt and Trudgian \cite{PT} through height
$3\cdot10^{12}$ therefore give hyperbolicity for every
$d\le9\cdot10^{24}$. This finite verification does not control the unshifted family as $d\to\infty$.
The following theorem gives a quantitative and asymptotic form of the Jensen-P\'olya criterion for RH.

\begin{theorem}\label{thm:main}
There exists an absolute constant $C>0$ such that, for each integer $d\ge 2$, the polynomial $J_d(X)$ possesses at least $d - C\sqrt{d\log d}$ distinct negative real roots.
\end{theorem}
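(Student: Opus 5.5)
The plan is to write $J_d$ as an average of explicitly real-rooted polynomials and then count sign changes of $J_d(-x)$ on $(0,\infty)$.

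\textbf{Step 1: integral representation.} Write $\xi(1/2+it)=\int_{-\infty}^{\infty}\Phi(u)\cos(tu)\,du$ with the classical super-exponentially decaying positive kernel $\Phi$. Then
\[
F(z)=\int\Phi(u)\cosh(\sqrt z\,u)\,du,\qquad \gamma(j)=\frac{j!}{(2j)!}\int\Phi(u)u^{2j}\,du .
\]
Substituting this into \eqref{eq:J-definition} gives
\[
J_d(X)=\int\Phi(u)\,h_d(u^2X)\,du,\qquad h_d(y)=\sum_{j=0}^d\binom dj\frac{j!}{(2j)!}\,y^j .
\]
I expect $h_d(-s^2)$ to be, up to normalization, a Hermite polynomial $H_{2d}$ in a rescaled variable; that identification should be checked. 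If so, $h_d(-s^2)$ is a real-rooted polynomial approximation of $\cos(s)$ with $d$ simple negative zeros in $y$. This step is routine.

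\textbf{Step 2: asymptotics of $J_d(-x)$.} Since $F$ has order $1/2$, only $O(\sqrt R\log R)$ zeros of $J_d(X/d)\approx F(X)$ lie in $|X|\le R$. So almost all $d$ zeros of $J_d$ lie far out, where $J_d(-x)=\sum_j\binom dj\gamma(j)(-x)^j$ is governed by the large-$j$ coefficients. There I would use the asymptotic expansion of $\gamma(j)$ from \cite{GORZ}, namely $\log\gamma(j)=A(j)+O(j^{-1})$ with explicit smooth $A$. Equivalently, I would do a saddle point on the $u$-integral in Step 1, using the Plancherel–Rotach asymptotics of $H_{2d}$ in its oscillatory region. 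The target is an approximation, uniform for $x$ in a range $I_d=[a_d,b_d]$, of the form
\[
J_d(-x)=M_d(x)\bigl(\cos\theta_d(x)+E_d(x)\bigr),\qquad M_d>0,\quad |E_d|\le\tfrac12 .
\]
Here $\theta_d$ is monotone with total variation $\pi(d-O(\sqrt{d\log d}))$ over $I_d$.

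\textbf{Step 3: counting.} The intermediate value theorem then gives one sign change of $J_d(-x)$ between consecutive points where $\theta_d\in\pi\mathbb Z$. This yields at least $d-C\sqrt{d\log d}$ distinct negative zeros, the constant coming from the absolute asymptotics of $\Phi$.

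\textbf{Main obstacle.} The hard part is Step 2 near the two ends of $I_d$. At the inner end, $x\asymp d^{-1}\cdot\mathrm{poly}(\log d)$, the behaviour transitions to the $F$-regime, where unconditional sign information about $\Xi$ is unavailable. At the outer end, the Hermite turning point, the saddle coalesces with the endpoint $j=d$. In both transitions one must excise windows and bound $E_d$. A Gaussian width $\sqrt d$ of the saddle in $j$, with a $\sqrt{\log d}$ factor to beat the error terms, is what I expect to produce the loss $C\sqrt{d\log d}$. I would first try to make the error explicit by a Taylor expansion of $A(j)$ to second order around the saddle $j^*(x)$, with tails controlled by the log-concavity of $\gamma(j)$.
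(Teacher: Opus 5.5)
\textbf{There is a genuine gap: Step 2 is the entire proof, and you have only stated its target.} Your Steps 1 and 3 are sound. The identification is right: $h_d(-s^2)=\frac{(-1)^d\,d!}{(2d)!}H_{2d}(s/2)$. The closing sign-change count is also how the paper finishes. Note, though, that Step 1 gives nothing by itself, since a positive average of the real-rooted polynomials $h_d(u^2X)$ need not be real-rooted.

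\textbf{Why the suggested method for Step 2 fails.} Since $\gamma(j)>0$, the sum $J_d(-x)=\sum_j\binom dj\gamma(j)(-x)^j$ alternates and its absolute sum is $J_d(x)$. For $x$ in the bulk of the range where the zeros lie, $|J_d(-x)|/J_d(x)$ is exponentially small in $d$.
\begin{itemize}
\item A second-order Taylor expansion of $A(j)$ about the real maximizer $j^*(x)$ only produces an alternating discrete Gaussian. That sum is itself exponentially small and carries no information about the true value.
\item More generally, any real-variable asymptotic for $\gamma(j)$ with polynomially small relative error, such as $O(j^{-1})$, determines $J_d(-x)$ only up to an error of size $J_d(x)\,d^{-O(1)}$. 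This is exponentially larger than $|J_d(-x)|$.
\item Log-concavity bounds moduli and cannot see signs. The value is decided by complex saddles.
\end{itemize}
Your Plancherel--Rotach variant has the same defect as stated. On the oscillatory range the Hermite phase is strictly monotone in $u$, so $\Phi(u)H_{2d}(u\sqrt x/2)$ has no real saddle there, and the $u$-integral is again a massive cancellation. To make that route work you would need to: continue $\Phi$ and the Hermite asymptotics into a complex strip; locate a pair of conjugate complex saddles; control the arithmetic factor there unconditionally; bound the turning-point region; and compute the resulting phase. None of this is in the proposal.

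\textbf{How the paper supplies this step.} It works with $F$ itself rather than $\Phi$. It writes the rescaled reciprocal polynomial exactly as $A_d(x)=\frac{d!}{2\pi i\,d^d}\oint F(T^2u)e^{dxu}u^{-d-1}\,du$, with $T\asymp d/\log d$ defined by $d=\frac{T}{2\pi}\bigl(\log\frac{T}{2\pi}-1\bigr)$. With this scaling, Stirling's formula turns the integrand into an amplitude times $e^{dh_q(u)}$ for an explicit phase $h_q$. For $x=-q$ with $q_e<q\le c_0T$ the phase has two conjugate saddles. A concavity bound on the circle through them, plus a uniform complex Gaussian lemma, give $A_d(-q)=2\Re Z_d(q)+o(|Z_d(q)|)$ uniformly for $q_e+\tau\le q\le c_0T$, where $\tau\asymp\sqrt{\log d/d}$. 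The unconditional input is that $q\le c_0T$ keeps the saddles at $\Re s\ge4$, where the Euler product bounds $\zeta$ and its argument. Hence the phase is $dS(q)+O(1)$, where $S(q)=\Im h_q(u_q)$ is computed exactly and increases from $0$ at the edge to $\pi-O(q^{-1/2})$. No monotonicity of the full phase is needed.

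\textbf{Corrections to your bookkeeping.}
\begin{itemize}
\item The inner end sits at $|X|\asymp1/\log d$, not $d^{-1}\mathrm{poly}(\log d)$.
\item The $\sqrt{d\log d}$ loss is not a Gaussian width in $j$. It is the phase deficit $dQ^{-1/2}$ at $Q=c_0T$, forced by keeping the saddles in the Euler-product half-plane.
\item The coalescing-saddle end costs only $O(d^{1/4}(\log d)^{3/4})$.
\end{itemize}
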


\begin{remark}
The proportion of real zeros therefore tends to one with an error
of size $O(\sqrt{\log d/d})$, confirming an asymptotic form of the
Jensen--P\'olya criterion for RH.
\end{remark}

We prove that the same phenomenon holds for the Dedekind zeta-function of every number
field. To state it, let $K$ be a number field of degree
$N=[K:\mathbb Q]=r_1+2r_2$, where $r_1$ is the number of real
embeddings and $r_2$ is the number of conjugate pairs of complex
embeddings. Write $\Delta_K>0$ for the absolute discriminant and
$\mathcal O_K$ for the ring of integers of $K$. The Dedekind
zeta-function is the meromorphic continuation of
\[
 \zeta_K(s)=\sum_{\mathfrak a\ne0}(\mathrm N\mathfrak a)^{-s}
          =\prod_{\mathfrak p}
             \bigl(1-(\mathrm N\mathfrak p)^{-s}\bigr)^{-1}
 \qquad(\Re s>1).
\]
Here the sum runs over the nonzero integral ideals of $\mathcal O_K$,
the product runs over its nonzero prime ideals, and
$\mathrm N\mathfrak a=|\mathcal O_K/\mathfrak a|$ is the absolute
norm. Define
\[
 \Gamma_{\mathbb R}(s)=\pi^{-s/2}\Gamma(s/2),\qquad
 \Gamma_{\mathbb C}(s)=2(2\pi)^{-s}\Gamma(s),
\]
and the entire completed function
\begin{equation}\label{eq:xi-K-definition}
 \xi_K(s)=\frac12s(s-1)\Delta_K^{s/2}
          \Gamma_{\mathbb R}(s)^{r_1}
          \Gamma_{\mathbb C}(s)^{r_2}\zeta_K(s).
\end{equation}
It satisfies $\xi_K(s)=\xi_K(1-s)$, and its zeros are precisely
the nontrivial zeros of $\zeta_K$ \cite[Section~2.1]{HSW}.
Its even expansion at the center defines
\begin{equation}\label{eq:FK-JK-definition}
 F_K(z)=\xi_K\!\left(\frac12+\sqrt z\right)
       =\sum_{j\ge0}\frac{\gamma_K(j)}{j!}z^j,
 \qquad
 J_{K,d}(X)=\sum_{j=0}^d\binom dj\gamma_K(j)X^j.
\end{equation}
The functional equation makes $F_K$ independent of the square root.
For $K=\mathbb Q$, these definitions recover $F$ and $J_d$.
The generalized Riemann hypothesis (GRH) for $\zeta_K$ asserts
that all of its nontrivial zeros have real part $1/2$.
By Proposition~\ref{prop:Jensen-criterion}, this is equivalent to
the assertion that every nonzero $J_{K,d}$ has only nonpositive
real zeros.
The following theorem is the general form of our result. 

\begin{theorem}\label{thm:Dedekind}
For each number field $K$, there exist constants $C_K>0$ and $d_0(K)\ge2$ such that for every integer $d\ge d_0(K)$, the polynomial $J_{K,d}(X)$ is nonzero and has at least $d-C_K\sqrt{d\log d}$ distinct negative real zeros. 
\end{theorem}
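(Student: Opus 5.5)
The plan is to relate $J_{K,d}$ to the entire function $F_K$ through the classical Laguerre-type limit
\[
 J_{K,d}(X/d)=\sum_{j=0}^d \frac{d!}{(d-j)!\,d^j}\frac{\gamma_K(j)}{j!}X^j\longrightarrow F_K(X)
\]
locally uniformly. The key point is to make this convergence quantitative on large discs. Once that is done, we count sign changes of $J_{K,d}$ on the negative axis, using the known zeros of $F_K$ on the negative real axis. Those zeros are $z=-t^2$ for zeros $\rho=\tfrac12+it$ of $\zeta_K$ on the critical line with $t$ real.

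The first step is an unconditional supply of sign changes of $F_K(-x)=\xi_K(\tfrac12+i\sqrt x)$. This quantity is real, with sign changes at odd-order critical-line zeros. By a Hardy--Selberg type theorem for Dedekind zeta-functions (Hardy's method suffices), the number of distinct odd-order zeros $\tfrac12+it$ with $0<t\le T$ is at least $c_K T$. We only need many sign changes, so Hardy's $\gg T$ is enough, but we need it with a quantitative spacing. Concretely, we will exhibit points $0<x_1<x_2<\dots<x_M\le R$ at which $F_K(-x_k)$ alternates in sign with $|F_K(-x_k)|\ge \delta_k$, where $M\approx$ the number of sign changes in $[0,R]$.

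Alternatively, and more robustly, use the Riemann--von Mangoldt formula together with positivity. All $\gamma_K(j)>0$ and $F_K$ has order $1/2$, so Hadamard gives a product over zeros. A Jensen-formula count then shows that $F_K$ has $\asymp \sqrt R\log R$ zeros in $|z|\le R$. We then need most of these to be close to the negative axis. The cleanest route is to compare with critical-line counts, which requires Selberg-type positive proportion. To avoid that, I instead intend to exploit the structure of the problem: $d-C\sqrt{d\log d}$ real zeros from only $d$ total.

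The second step is the transfer: take $R\asymp d^2/ (\log d)$ or similar, so that the error in $J_{K,d}(X/d)-F_K(X)$ is smaller than $\delta_k$ at each $x_k$. The error term is controlled by $\frac{d!}{(d-j)!d^j}=1+O(j^2/d)$ for $j\le\sqrt d$, plus tail bounds from the Stirling-type decay $\gamma_K(j)/j!\le (C\log j/ j)^{2j}\cdot e^{O(j)}$, which follow from the order-$1/2$ growth of $F_K$. Wherever the approximation holds, alternation of $F_K$ forces alternation of $J_{K,d}$, hence distinct negative zeros by the intermediate value theorem. Nonvanishing of $J_{K,d}$ is immediate from $\gamma_K(0)>0$.

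I expect the main obstacle to be the count itself. A naive approximation region only produces $O(\sqrt{d}\log d)$ zeros, far fewer than $d-C\sqrt{d\log d}$. So the true argument must be different: most zeros of $J_{K,d}$ should lie far out, at $|X|$ of size comparable to $d$ times large quantities, where $J_{K,d}$ is not approximated by $F_K$. I would therefore switch to a saddle-point analysis, writing
\[
 J_{K,d}(X)=\frac{1}{2\pi i}\oint F_K\text{-Borel-type integral}\,(1+Xw)^d\,\cdots
\]
using $\gamma_K(j)=\int_0^\infty$-moment representations. Specifically, $\gamma_K(j)$ is a moment of the Mellin/theta kernel $\Phi_K$, so that $J_{K,d}(X)=\int \Phi_K(u)(1+Xu^2)^d\,du$-type formulas hold. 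The next stage is a steepest-descent evaluation for negative $X$, showing $J_{K,d}(-y)\approx A(y)\cos(\theta_d(y))+\text{error}$ with $\theta_d$ increasing by about $\pi d$ across the range. Losing at most $C_K\sqrt{d\log d}$ oscillations to the transition zones, where the error dominates, is then the hardest part. It is where the $\sqrt{d\log d}$ should come from, via a Gaussian window of width $\sqrt{d\log d}$ around the saddle.
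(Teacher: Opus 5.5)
There is a genuine gap: the proposal stops where the proof has to start. Your first route (the limit $J_{K,d}(X/d)\to F_K(X)$ plus critical-line sign changes of $F_K(-x)$) only sees zeros in the small range where that approximation is uniform, and, as you concede, it gives far fewer than $d-o(d)$ zeros. Your second route is just the assertion that $J_{K,d}(-y)\approx A(y)\cos\theta_d(y)$ with $\theta_d$ increasing by about $\pi d$. That assertion is the whole content of the theorem. You give no integral on which to run steepest descent, no saddle, no reason the phase change is $\pi d$, and no control of the contour away from the saddle.

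Several of your stated inputs are also wrong. Positivity of the $\gamma_K(j)$ is not available in general; the paper proves it only for $K=\mathbb Q$, from the positive theta kernel. It is false for some fields: $\gamma_K(0)=\xi_K(1/2)$ vanishes when $\zeta_K(1/2)=0$, which occurs (Armitage). So your nonvanishing argument fails, any kernel $\Phi_K$ must change sign for those fields, and nonvanishing holds only for $d\ge m_K$, the order of vanishing of $F_K$ at $0$. This is why the statement has a threshold $d_0(K)$. Finally, the moment formula $\gamma_K(j)=\frac{2\,j!}{(2j)!}\int_0^\infty\Phi_K(t)t^{2j}\,dt$ does not produce a kernel $(1+Xt^2)^d$. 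Summing gives $\sum_j\frac{d!}{(d-j)!\,(2j)!}(Xt^2)^j$, a constant multiple of the Laguerre polynomial $L_d^{(-1/2)}(-Xt^2/4)$, so even the setup of your saddle-point analysis is not right.

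What is missing is an explicit integral whose saddle structure can be computed, and the paper's argument supplies it as follows.
\begin{enumerate}
\item Pass to the reciprocal polynomial $R_d(x)=x^dJ_{K,d}(1/x)$, which satisfies exactly $R_d(x)=\frac{d!}{2\pi i}\oint F_K(z)e^{xz}z^{-d-1}\,dz$.
\item Rescale $z=T^2u$ with $T\asymp d/\log d$ defined by $d=\frac{NT}{2\pi}(\log(T/a_K)-1)$. This puts the Stirling growth of the gamma factors in $\xi_K(\frac12+T\sqrt u)$ on the same exponential scale as $e^{-dqu}u^{-d}$, giving the phase $h_q(u)=-qu+\pi\sqrt u\,(1+\Log u/\Lambda)-\Log u$.
\item Beyond the edge $q_e\approx\pi^2/16$ there are two conjugate saddles. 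A concavity argument in $\cos\beta$ shows they are the only maxima of $\Re h_q$ on the circle through them.
\item A uniform complex Gaussian lemma is needed, because $D_q=u_q^2h_q''(u_q)$ becomes nearly imaginary at the edge. It gives $A_d(-q)=2\Re Z_d(q)+o(|Z_d(q)|)$ for the rescaled reciprocal polynomial $A_d$.
\item The phase of $Z_d$ is $dS(q)+O(1)$, with an exact action $S(q)$ rising from $0$ at $q_e$ to $\pi-O(q^{-1/2})$.
\item The $O(1)$ correction requires the saddle values $\frac12+T\sqrt{u_q}$ to lie in $\Re s\ge4$, where the Euler product bounds $\log\zeta_K$. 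This forces $q\le c_0T$.
\end{enumerate}
That truncation is the source of the $\sqrt{d\log d}$ loss, not a Gaussian window around the saddle: $d\,(\pi-S(c_0T))\ll dT^{-1/2}\asymp\sqrt{d\log d}$. The edge cutoff $\tau\asymp\sqrt{\log d/d}$ is chosen so that $e^{-cd\tau^2}$ beats the polynomial factor $d^{N+1}$ coming from the size of $\zeta_K$ off the saddles. It costs only $d\tau^{3/2}=O(d^{1/4}(\log d)^{3/4})$.
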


\begin{remark}
    The Jensen--P\'olya criterion for GRH requires real-rootedness
of every nonzero Jensen polynomial. Our theorem instead
shows that the proportion of real zeros tends to one,
while permitting a growing number of nonreal zeros.
Farmer~\cite{Farmer2022} has discussed the limitations
of interpreting Jensen polynomial hyperbolicity as
evidence for RH. Although the rescaled Jensen
polynomials converge locally uniformly to the
associated entire function, our estimate does not
rule out zeros of $\zeta_K(s)$ off the critical line.
In particular, it does not yield a new zero-free region
or zero-density estimate.
\end{remark}

We make no hypothesis on the central value $\zeta_K(1/2)$.
Such a value can vanish, as Armitage's work shows
\cite{Armitage}. Let $m_K\ge0$ denote the order of vanishing of
$F_K$ at zero, with $m_K=0$ when $F_K(0)\ne0$. Then $J_{K,d}$
is identically zero for $d<m_K$ and is nonzero for
$d\ge m_K$. This explains the sufficiently large degree formulation.
We will prove Theorem~\ref{thm:Dedekind}. Theorem~\ref{thm:main} is the $K=\mathbb{Q}$ case.

The proof uses the \emph{reciprocal polynomial}
\begin{equation}\label{eq:reciprocal-definition}
 R_{K,d}(x):=x^dJ_{K,d}(1/x)
           =\sum_{j=0}^d\binom dj\gamma_K(j)x^{d-j}.
\end{equation}

For nonzero arguments, this switch replaces each zero by its
reciprocal and preserves multiplicity, so it suffices to count
distinct negative real zeros of $R_{K,d}$. Multiplying the Taylor
series of $F_K(z)$ and $e^{xz}$ shows that the coefficient of $z^d$
in their product is $R_{K,d}(x)/d!$. Hence, Cauchy's coefficient formula
gives
\[
 R_{K,d}(x)=\frac{d!}{2\pi i}
       \oint_{|z|=\rho}F_K(z)e^{xz}\frac{\dd z}{z^{d+1}},
\]
where $\rho>0$ is arbitrary and the circle is traversed
counterclockwise. Following \cite{GORZ, GORTTW, Wagner}, we construct an expression to study the zeros of $J_{K,d}$. Earlier work with shifts $n$ reformulated the problem to admit Hermite-polynomial approximations. Here we take $n=0$ (no shift) and instead use a trigonometric reformulation.
Indeed, after a positive rescaling that balances the
growth of the gamma factors with the degree parameter, the main
contributions come from two conjugate saddle points, where the
derivative of the exponential phase vanishes. Rather nicely, these contributions
combine into a positive multiple of a cosine, whose sign crossings we estimate to count real zeros.

The point is to make this approximation uniform on an interval
long enough to capture many distinct real zeros. A concavity identity
controls the contour away from the saddles, while a uniform complex
Gaussian estimate evaluates their contributions. At the saddles,
the Dedekind zeta-function is evaluated in a fixed half-plane of
absolute convergence, which bounds its contribution to the phase.
An exact formula for the imaginary part of the exponent then yields
a net phase change of nearly $d\pi$, forcing alternating signs and
hence distinct real zeros. The endpoint losses account for the
error in Theorem~\ref{thm:Dedekind}, and hence capture the true difficulty of RH.

Section~\ref{sec:preliminaries} establishes the coefficient
properties and Jensen's criterion, and derives the integral
representation. We analyze the saddle points and their phase in
Section~\ref{sec:geometry} and bound the remaining factors in
Section~\ref{sec:amplitude}. These estimates give the uniform
oscillatory formula of Section~\ref{sec:oscillation}.
Section~\ref{sec:counting} completes the proof by counting sign
changes.

\section{The coefficients and the Cauchy integral}\label{sec:preliminaries}

We establish the coefficient and growth properties that justify
Jensen's criterion, and then express a rescaled reciprocal polynomial
by a Cauchy integral. Throughout the proof, fix the number field $K$
and write $F$, $\gamma(j)$, $J_d$, and $R_d$ for $F_K$,
$\gamma_K(j)$, $J_{K,d}$, and $R_{K,d}$, respectively.
The notation $U\ll V$ or $U=O(V)$ means $|U|\le CV$ for a
constant $C$, and $U\asymp V$ means the corresponding two-sided
bound between positive quantities. Constants may depend on $K$
unless they are explicitly declared absolute, and their values
may change between occurrences.

\subsection{Jensen's criterion}

Functional equations will supply the real entire function used in
Jensen's criterion. For the Riemann case, a positive theta kernel
also gives the sign of every coefficient.

\begin{lemma}\label{lem:positive-growth}
The function $F$ is a nonzero real entire function. With
$M_F(R)=\max_{|z|\le R}|F(z)|$, its order
$\limsup_{R\to\infty}\log\log M_F(R)/\log R$ is at most $1/2$.
In particular, $J_d$ is nonzero for all sufficiently large
$d$ and has degree at most $d$. When $K=\mathbb Q$, every
$\gamma(j)$ is strictly positive.
\end{lemma}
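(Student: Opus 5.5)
We first show that $\xi_K$ is a nonzero entire function of order $1$ that is real on the real axis. Then we pass to $F$ through the functional equation. The positivity statement for $K=\mathbb Q$ will come from Riemann's theta integral.

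\emph{Entire, real, nonzero.} The Hecke integral representation gives the meromorphic continuation of $\zeta_K$. Its only pole is a simple pole at $s=1$. The trivial zeros of $\zeta_K$ cancel the poles of $\Gamma_{\mathbb R}(s)^{r_1}\Gamma_{\mathbb C}(s)^{r_2}$ at $s=0,-1,-2,\dots$. The factor $s(s-1)$ removes the remaining poles at $s=0,1$. Hence $\xi_K$ is entire, as recorded in \cite[Section~2.1]{HSW}. For real $s>1$ every factor in \eqref{eq:xi-K-definition} is real and positive, so $\xi_K\not\equiv0$. By Schwarz reflection $\overline{\xi_K(\bar s)}=\xi_K(s)$.

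Set $G(w)=\xi_K(1/2+w)$. The functional equation makes $G$ even, so $G(w)=\sum_j a_jw^{2j}$ with $a_j=G^{(2j)}(0)/(2j)!$. These coefficients are real because $G$ is real on the real axis. Thus $F(z)=\sum_j a_jz^j$ is a nonzero entire function with real coefficients, and $\gamma(j)=j!\,a_j\in\mathbb R$.

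\emph{Order.} For $|s|\ge2$ in $\Re s\ge1/2$ we use three estimates.
\begin{itemize}
\item Stirling's formula bounds the gamma factors by $\exp(O(|s|\log|s|))$.
\item In $\Re s\ge2$, $\zeta_K(s)$ is bounded.
\item In the strip $1/2\le\Re s\le2$, the standard convexity bound (Phragm\'en--Lindel\"of, applied to $(s-1)\zeta_K(s)$) gives polynomial growth in $|\Im s|$.
\end{itemize}
The functional equation transfers these bounds to $\Re s\le1/2$. So $\log|\xi_K(s)|\ll|s|\log|s|$, and $\max_{|w|\le r}|G(w)|\le\exp(Cr\log r)$. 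Since $M_F(R)=\max_{|w|\le\sqrt R}|G(w)|$, we get $\log M_F(R)\ll\sqrt R\log R$. Hence the order of $F$ is at most $1/2$. This bound also shows that $F$ is not a polynomial of positive degree plus nothing else: an entire function of order below $1$ with infinitely many nonzero Taylor coefficients. Note that $\xi_K$ has infinitely many zeros, since otherwise Hadamard's theorem would make $G=e^{aw+b}P(w)$ with $P$ a polynomial, which contradicts the Stirling growth along the real axis.

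\emph{Consequences for $J_d$.} By the definition, $J_d$ has degree at most $d$. Let $m=m_K$ be the least index with $\gamma(m)\ne0$; it exists because $F\not\equiv0$. For $d\ge m$ the coefficient $\binom dm\gamma(m)$ of $X^m$ in $J_d$ is nonzero, so $J_d\ne0$.

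\emph{Positivity for $K=\mathbb Q$.} We use Riemann's formula
\[
\xi(\tfrac12+w)=\int_1^\infty \Phi(t)\,\bigl(t^{w/2}+t^{-w/2}\bigr)\,t^{-3/4}\,dt,
\]
equivalently $\xi(1/2+it)=\int_{-\infty}^{\infty}\Phi(u)e^{itu}\,du$ with the classical kernel
\[
\Phi(u)=\sum_{n\ge1}\bigl(2\pi^2n^4e^{9u/2}-3\pi n^2e^{5u/2}\bigr)e^{-\pi n^2e^{2u}}.
\]
This kernel is even and positive. Positivity of $\Phi$ is classical (P\'olya), and checking it is the only step needing real work: the $n=1$ term dominates for $u\ge0$, and evenness follows from the theta transformation. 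Writing $w=it$ in the even representation and expanding the cosine gives
\[
G(w)=2\int_0^\infty\Phi(u)\cosh(wu)\,du,
\]
so
\[
a_j=\frac{2}{(2j)!}\int_0^\infty\Phi(u)u^{2j}\,du>0,
\]
and therefore $\gamma(j)>0$ for every $j$.
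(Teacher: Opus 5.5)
Your proposal is correct and follows essentially the same route as the paper: standard properties of $\xi_K$ give a real, even, nonzero entire function whose growth $\exp(O(r\log r))$ on $|w|\le r$ yields order at most $1/2$ for $F$, the first nonzero Taylor coefficient gives $J_d\ne0$ for $d\ge m_K$, and for $K=\mathbb Q$ P\'olya's kernel expresses $\gamma(j)$ as a positive moment integral. Only cosmetic repairs are needed: with your kernel (coefficients $2\pi^2$ and $3\pi$) the correct identity is $\xi(\tfrac12+w)=4\int_0^\infty\Phi(u)\cosh(wu)\dd u$, so your constants are off by a factor of $2$ and your first display in the variable $t\ge1$ is inconsistent and should be dropped; the termwise integration needs the routine majorant $\Phi(u)\ll e^{9u/2-\pi e^{2u}}$; and positivity on $u\ge0$ is immediate termwise since $2\pi n^2e^{2u}>3$.
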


\begin{proof}
The function $\xi_K$ is entire of order one, is real on the real
axis, and is even about $1/2$. These standard analytic properties
are recalled in \cite[Section~2.1]{HSW}. Consequently, its even
Taylor expansion defines a real entire function $F$ of order at
most $1/2$. More explicitly, the bound
$\max_{|s|\le R}|\xi_K(s)|\le\exp(C_{K,\varepsilon}R^{1+\varepsilon})$
for every $\varepsilon>0$ gives
$\max_{|z|\le R}|F(z)|\le
\exp(C'_{K,\varepsilon}R^{(1+\varepsilon)/2})$ for large $R$.
The function is not identically zero because $\xi_K(s)>0$ for
real $s>1$. Its first nonzero Taylor coefficient occurs at some
finite index $m_K$, and its contribution to $J_d$ is nonzero for
every $d\ge m_K$.

For $K=\mathbb Q$, the theta-integral representation in
\cite[(1.1)--(1.2)]{LM}, with the normalization
\eqref{eq:xi-definition}, is
\begin{equation}\label{eq:theta-integral}
 F(z)=2\int_0^\infty\Phi(t)\cosh(t\sqrt z)\dd t,
\end{equation}
where
\begin{equation}\label{eq:Phi}
 \Phi(t)=\sum_{m=1}^{\infty}
 \left(4\pi^2m^4e^{9t/2}-6\pi m^2e^{5t/2}\right)
 e^{-\pi m^2e^{2t}}.
\end{equation}
Each summand is positive for $t\ge0$ because
$2\pi m^2e^{2t}>3$. Moreover,
$\Phi(t)\le C\exp(9t/2-\pi e^{2t})$, as follows by factoring
out $e^{-\pi e^{2t}}$ and bounding the remaining series by a
constant multiple of $\sum_{m\ge1}m^4e^{-\pi(m^2-1)}$.
On $|z|\le R$, the series for $\cosh(t\sqrt z)$ is bounded
absolutely by $e^{t\sqrt R}$. This integrable majorant justifies
termwise integration and gives
\begin{equation}\label{eq:gamma-integral}
 \gamma(j)=\frac{2j!}{(2j)!}
             \int_0^\infty\Phi(t)t^{2j}\dd t>0.
\end{equation}
\end{proof}

In the Dedekind case, Jensen's criterion must retain the sign of
the polynomial zeros. We use its this form for real entire
functions of order less than one 
\cite[Theorem~3.1 and Corollary~3.2]{OS}.

\begin{proposition}\label{prop:Jensen-criterion}
GRH for $\zeta_K$ holds if and only if every nonzero polynomial
$J_{K,d}$ has only nonpositive real zeros. For $K=\mathbb Q$,
this is equivalent to hyperbolicity of $J_d$ for every $d\ge1$.
\end{proposition}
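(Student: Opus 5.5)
The plan is to reduce Proposition~\ref{prop:Jensen-criterion} to the cited Jensen--P\'olya theorem for real entire functions of order less than one \cite[Theorem~3.1 and Corollary~3.2]{OS}. That theorem says the following. Let $\phi(z)=\sum_j \gamma(j)z^j/j!$ be a real entire function of order $<1$, not identically zero. Then every zero of $\phi$ is real and nonpositive if and only if every nonzero Jensen polynomial $\sum_{j\le d}\binom dj\gamma(j)X^j$ has only real nonpositive zeros. By Lemma~\ref{lem:positive-growth}, $F=F_K$ is a nonzero real entire function of order at most $1/2<1$, so the hypotheses hold.

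It therefore suffices to show that GRH for $\zeta_K$ is equivalent to all zeros of $F$ being real and $\le 0$.
\begin{itemize}
\item The zeros of $\xi_K$ are exactly the nontrivial zeros of $\zeta_K$, and $F(z)=\xi_K(1/2+\sqrt z)$.
\item A point $z$ is a zero of $F$ iff $s=1/2+w$ is a zero of $\xi_K$ with $w^2=z$. Evenness of $\xi_K(1/2+w)$ makes the choice of root irrelevant.
\item $z$ is real and $\le0$ iff $w\in i\R$, that is, iff $\Re s=1/2$.
\end{itemize}
Hence all zeros of $F$ lie in $(-\infty,0]$ exactly when all nontrivial zeros lie on the critical line, which is GRH.

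One subtlety is the zero $z=0$, which occurs when $\zeta_K(1/2)=0$. It corresponds to $w=0$, i.e.\ $s=1/2$, which lies on the line, so it causes no problem. It also explains why the statement allows $0$ as a zero ("nonpositive") and restricts to nonzero $J_{K,d}$, since $J_{K,d}\equiv0$ for $d<m_K$. Multiplicities play no role in either direction.

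For $K=\mathbb Q$, Lemma~\ref{lem:positive-growth} gives $\gamma(j)>0$ for all $j$. Every $J_d$ then has positive coefficients, so it is nonzero and of exact degree $d$, and it cannot vanish at any $X\ge0$. Real-rootedness of $J_d$ is thus equivalent to all its zeros being real and negative, and the first part gives the equivalence with hyperbolicity of $J_d$ for every $d\ge1$.

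The only step needing care is applying the cited theorem in the correct form. The classical Jensen criterion is stated for functions in the Laguerre--P\'olya class, where the zeros are merely real. The sign-sensitive version, with nonpositive zeros matched to nonpositive-rooted Jensen polynomials, requires order $<1$ (genus zero, product $c z^m\prod(1+z/a_k)$ with $a_k>0$). That is exactly why Lemma~\ref{lem:positive-growth} records the order bound $1/2$. I would check that the hypotheses of \cite{OS} use the order, and not merely genus, and that they handle the possibility of a zero at the origin. If they do not handle it, I would apply the theorem to $F(z)/z^{m_K}$ after relating its Jensen polynomials to those of $F$.
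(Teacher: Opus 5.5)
Your proposal is correct and follows essentially the same route as the paper: you identify GRH with all zeros of $F$ lying in $(-\infty,0]$, invoke the order-less-than-one Jensen--P\'olya criterion, and use positivity of the $\gamma(j)$ for $K=\mathbb Q$. The differences are minor: the paper carries out the reduction to $H(z)=F(z)/z^{m_K}$ that you held in reserve, via $J_d(X)=\frac{d!}{(d-m_K)!}X^{m_K}J_{H,d-m_K}(X)$, and it proves the converse directly by Hurwitz's theorem applied to $J_d(z/d)\to F(z)$ rather than through the citation.
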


\begin{proof}
The relation $\xi_K(1/2+w)=F(w^2)$ identifies GRH with the
assertion that all zeros of $F$ are nonpositive real numbers.
Under this assertion, the order bound gives the canonical product
\[
 F(z)=c z^{m_K}\prod_{\nu}\left(1+\frac z{\lambda_\nu}\right),
 \qquad c>0,\quad \lambda_\nu>0,\quad
 \sum_\nu\lambda_\nu^{-1}<\infty.
\]
Here zeros are repeated according to multiplicity, and positivity
of $F(z)$ for real $z>1/4$ fixes the sign of $c$.
Its coefficients are nonnegative. To apply the Jensen--P\'olya
criterion with a nonzero constant coefficient, put $H(z)=F(z)/z^{m_K}$.
For $d\ge m_K$, comparison of coefficients gives
\[
 J_d(X)=\frac{d!}{(d-m_K)!}\,X^{m_K}J_{H,d-m_K}(X),
\]
where $J_{H,k}$ is formed from the factorial Taylor coefficients
of $H$ by the same rule as in \eqref{eq:FK-JK-definition}.
The criterion applied to $H$ gives hyperbolicity of $J_{H,k}$,
so each nonzero $J_d$ is hyperbolic. Its nonnegative coefficients
exclude positive zeros.

Conversely, the polynomials $J_d(z/d)$ converge locally uniformly
to $F(z)$. Indeed, $\binom dj d^{-j}\le1/j!$, and this quantity
tends to $1/j!$ for each fixed $j$, so the absolutely convergent
Taylor series of $F$ gives a uniform majorant on every disk.
If every nonzero $J_d$ has only nonpositive real zeros, Hurwitz's
theorem shows that $F$ has no zero outside $(-\infty,0]$.
This proves the general equivalence. For $K=\mathbb Q$,
Lemma~\ref{lem:positive-growth} makes every coefficient
positive, so hyperbolicity already places every zero on the
negative real axis.
\end{proof}

\subsection{The gamma factors}

To make use of Cauchy's integral,
we balance the growth
of the gamma factors with the terms involving $d$. The discriminant
enters through the positive constant
\begin{equation}\label{eq:aK-definition}
 a_K=2\pi\Delta_K^{-1/N}.
\end{equation}
Let $T>a_Ke$ be the unique solution of
\begin{equation}\label{eq:T-scale}
 d=\frac{NT}{2\pi}\left(\log\frac{T}{a_K}-1\right),
\end{equation}
and put $\Lambda=2(\log(T/a_K)-1)$ and $B=T^2$.
Existence and uniqueness follow because the right side of
\eqref{eq:T-scale} increases strictly from zero to infinity on
this interval. These definitions give the exact identities
\begin{equation}\label{eq:T-identities}
 NT\Lambda=4\pi d,\qquad T=a_Ke^{\Lambda/2+1}.
\end{equation}
Taking logarithms yields
\[
 \Lambda+2\log\Lambda
 =2\log d+2\log(2/N)+\frac2N\log\Delta_K-2.
\]
Consequently, for the fixed field $K$, we have
\begin{equation}\label{eq:T-comparisons}
 \Lambda=2\log d+O_K(\log\log d),\qquad
 T\asymp_K\frac d{\log d}\qquad(d\to\infty).
\end{equation}
We define the rescaled reciprocal polynomial by
\begin{equation}\label{eq:A-definition}
 A_d(x)=\left(\frac Bd\right)^dR_d\!\left(\frac dB x\right)
       =\sum_{j=0}^d\binom dj\gamma(j)\left(\frac Bd\right)^j x^{d-j}.
\end{equation}
Positive scaling and reciprocation preserve the number of distinct
negative real zeros, so it suffices to count such zeros of $A_d$.
Cauchy's coefficient formula gives the representation
\begin{equation}\label{eq:Cauchy}
 A_d(x)=\frac{d!}{2\pi i\,d^d}
       \oint F(Bu)e^{dxu}\frac{\dd u}{u^{d+1}}
\end{equation}
on every positively oriented circle about zero. This follows
from the integral for $R_d$ by substituting $z=Bu$ and using
\eqref{eq:A-definition}.
For $q>0$, the corresponding arguments of $A_d$ and $J_d$ are
related by
\begin{equation}\label{eq:root-dictionary}
 A_d(-q)=0\quad\Longleftrightarrow\quad
 J_d\!\left(-\frac{B}{dq}\right)=0.
\end{equation}
Therefore, the real parameter $q$ locates negative
zeros of the Jensen polynomial $J_d$.

\section{Circles through the saddle points}\label{sec:geometry}

For $x=-q<0$, we choose a circle on which the large exponential
part of the integrand in \eqref{eq:Cauchy} has maximal modulus
at two conjugate saddles. We determine their positions, bound the
rest of the circle, and compute the
imaginary part of the exponent that governs the oscillation.
Throughout Sections~\ref{sec:geometry}--\ref{sec:counting},
we take $d$ above a fixed threshold depending on $K$, enlarged when
necessary, so that the estimates are uniform in the stated ranges.
All constants are fixed independently of $d$ and $q$. The field
remains fixed throughout, so the threshold may be enlarged to
absorb constants involving its degree or discriminant.

On the lower half-plane, we use the branches of the logarithm
and square root corresponding to $-\pi\le\arg u\le0$, with
boundary values taken from below, and use the conjugate branches
on the upper half-plane. The role of the scale is seen in the
exponent supplied by Stirling's formula. With $w=T\sqrt u$, the
identities \eqref{eq:T-identities} give
\begin{equation}\label{eq:scale-exponent}
 \frac{Nw}{2}\left(\Log\frac{w}{a_K}-1\right)
 =\frac{NT\sqrt u}{4}(\Lambda+\Log u)
 =\pi d\sqrt u\left(1+\frac{\Log u}{\Lambda}\right).
\end{equation}
Hence, the gamma and discriminant contributions have the same large parameter $d$ as
$e^{-dqu}u^{-d}$. Writing $\kappa=\pi/2$ to shorten the saddle
formulas, we combine these terms in the complex phase
\begin{equation}\label{eq:phase-definition}
 h_q(u)=-qu+2\kappa\sqrt u
             +\frac{2\kappa}{\Lambda}\sqrt u\Log u-\Log u.
\end{equation}
The exact factorization is proved in Section~\ref{sec:amplitude}.
Its exponential factor is $e^{dh_q(u)}$, so $\Re h_q$ controls
its size and $\Im h_q$ controls its oscillation. The saddle
equation is $h_q'(u)=0$, and we suppress the dependence on $d$
through $\Lambda$, which remains fixed when $q$ varies.

\subsection{Parameterization and the edge}

We describe the lower saddle using $v=\sqrt u=re^{-i\alpha}$,
so $r$ and $-\alpha$ are polar coordinates of $\sqrt u$, while
the saddle itself is $u=r^2e^{-2i\alpha}$. As $\alpha$ decreases
to zero, the conjugate saddles coalesce on the positive real axis
at a parameter $q_e$, called the \emph{edge}. The following lemma
parameterizes the branch $q>q_e$ on which the two contributions
produce oscillation.

\begin{lemma}\label{lem:saddle-parameters}
There are unique positive numbers $r_e$ and $q_e$ satisfying
\begin{equation}\label{eq:edge-definition}
 \kappa r_e\left(1+\frac{2\log r_e}{\Lambda}\right)=2,
 \qquad q_e=r_e^{-2}+\frac{2\kappa}{\Lambda r_e}.
\end{equation}
For every $q_e<q\le T$, there is a unique saddle of $h_q$ of the form
$u_q=r^2e^{-2i\alpha}$ with $r>0$ and $0<\alpha<\pi/2$.
Its parameters are characterized by
\begin{align}
 \frac{\Lambda\cos\alpha}{\kappa r}+\alpha\cot\alpha
  &=\frac\Lambda2+1+\log r,\label{eq:saddle-r}\\
 q&=r^{-2}+\frac{2\kappa}{\Lambda}
                    \frac{\alpha}{\sin\alpha}\,r^{-1}.
                    \label{eq:saddle-q}
\end{align}
As $\alpha$ increases, $r$ decreases strictly and $q$ increases
strictly. The following hold uniformly in this range:
\begin{equation}\label{eq:saddle-comparisons}
 1\le qr^2\le1+\frac{2\pi}{\Lambda},\qquad
 r\asymp q^{-1/2},\qquad \cos\alpha\asymp r.
\end{equation}
The edge satisfies $q_e=\pi^2/16+O(\Lambda^{-1})$.
For some absolute $t_0>0$, we also have
\begin{equation}\label{eq:edge-comparison}
 q-q_e\asymp\alpha^2\qquad(0<q-q_e\le t_0).
\end{equation}
\end{lemma}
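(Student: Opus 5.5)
The plan is to derive the saddle equations explicitly from \eqref{eq:phase-definition}, then reduce everything to a one-parameter family indexed by $\alpha$ and read off monotonicity and the comparisons.

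\textbf{Deriving the two real equations.} With $v=\sqrt u$ and the lower branch $\Log u=2\log r-2i\alpha$, differentiation gives
\[
uh_q'(u)=-qu+\kappa v+\frac{\kappa}{\Lambda}v(\Log u+2)-1 .
\]
Dividing by $v=re^{-i\alpha}$, the saddle equation becomes
\[
qre^{-i\alpha}+r^{-1}e^{i\alpha}=\kappa\Bigl(1+\frac{2\log r+2}{\Lambda}\Bigr)-\frac{2i\kappa\alpha}{\Lambda}.
\]
The imaginary part gives $(r^{-1}-qr)\sin\alpha=-2\kappa\alpha/\Lambda$, which is exactly \eqref{eq:saddle-q}. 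The real part gives $(qr+r^{-1})\cos\alpha=\kappa(\Lambda+2\log r+2)/\Lambda$. Substituting \eqref{eq:saddle-q} and multiplying by $\Lambda/(2\kappa)$ yields \eqref{eq:saddle-r}. Conversely, any $(r,\alpha)$ with $0<\alpha<\pi/2$ satisfying both equations produces a saddle $u=r^2e^{-2i\alpha}$ with that value of $q$.

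\textbf{Parameterizing by $\alpha$.} For fixed $\alpha\in(0,\pi/2)$, the function $r\mapsto \Lambda\cos\alpha/(\kappa r)-\log r$ decreases strictly from $+\infty$ to $-\infty$. Hence \eqref{eq:saddle-r} has a unique smooth solution $r(\alpha)$. Since $\cos\alpha$ and $\alpha\cot\alpha$ both decrease strictly in $\alpha$, implicit differentiation shows that $r$ decreases strictly. Then every term in \eqref{eq:saddle-q} increases strictly in $\alpha$ (namely $r^{-2}$, $r^{-1}$, and $\alpha/\sin\alpha$), so $q$ increases strictly.

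The endpoints are as follows.
\begin{itemize}
\item As $\alpha\to0^+$, using $\alpha\cot\alpha\to1$, equation \eqref{eq:saddle-r} becomes $\Lambda/(\kappa r)=\Lambda/2+\log r$, which is the first relation in \eqref{eq:edge-definition}. This gives $r\to r_e$ and $q\to q_e$; uniqueness of $r_e$ follows by the same monotonicity.
\item As $\alpha\to\pi/2^-$, we get $\log r\to-\Lambda/2-1$, so $r\to a_K/T$ by \eqref{eq:T-identities}. Then $q\gtrsim T^2/a_K^2>T$ for large $d$.
\end{itemize}
By continuity and strict monotonicity, each $q\in(q_e,T]$ corresponds to exactly one $\alpha$, which gives existence and uniqueness of the saddle. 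Finally, $r_e=4/\pi+O(\Lambda^{-1})$ from \eqref{eq:edge-definition}, so $q_e=\pi^2/16+O(\Lambda^{-1})$.

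\textbf{Comparisons.} From \eqref{eq:saddle-q} we have $qr^2=1+\frac{2\kappa}{\Lambda}\frac{\alpha}{\sin\alpha}r\ge1$. For the upper bound, I will combine $r\le r_e$ with the relation $\cos\alpha\asymp r$, which keeps $\alpha r/\sin\alpha$ bounded; this yields the constant $2\pi$ after a careful estimate. Getting that sharp constant is where the bookkeeping is delicate, and I expect it to be the main obstacle.

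With $qr^2\asymp1$ we get $r\asymp q^{-1/2}$. Since $q\le T$, this gives $\log r\ge-\tfrac14\log T-O(1)=-\Lambda/4-O(1)$. Therefore the right side of \eqref{eq:saddle-r} minus $\alpha\cot\alpha$ lies between $\Lambda/4-O(1)$ and $\Lambda/2+O(1)$. This gives $\Lambda\cos\alpha/(\kappa r)\asymp\Lambda$, that is, $\cos\alpha\asymp r$. (To avoid circularity, I will first use the crude bound $\alpha r/\sin\alpha\le \pi r_e/2$, which follows from $r\le r_e$ alone, to get $qr^2\asymp1$.)

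\textbf{Near the edge.} Both sides of \eqref{eq:saddle-r} are even and analytic in $\alpha$: $\alpha\cot\alpha=1-\alpha^2/3+\cdots$ and $\cos\alpha=1-\alpha^2/2+\cdots$. The $r$-derivative of the defining function is bounded away from zero uniformly in $\Lambda$. Hence $r(\alpha)-r_e=c(\Lambda)\alpha^2+O(\alpha^4)$ with $c(\Lambda)\asymp1$, and $q(\alpha)-q_e$ is likewise even with second derivative at $0$ of size $\asymp1$; this derivative is positive by the monotonicity above. This gives \eqref{eq:edge-comparison} for $0<q-q_e\le t_0$ with an absolute $t_0$.
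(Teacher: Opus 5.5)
Your outline follows the paper's proof essentially step for step: the real and imaginary parts of $qv+v^{-1}=\kappa(1+(2+2\Log v)/\Lambda)$, the implicit function $r(\alpha)$ from $E_r<0$ and $E_\alpha<0$, the endpoint evaluations at $\alpha=0$ and $\alpha=\pi/2$, and an edge expansion uniform in $\Lambda$. Two steps, however, are not actually proved.

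\textbf{The bound $qr^2\le1+2\pi/\Lambda$.} This is the step you flag as the main obstacle. It is not delicate, and your proposed route through $\cos\alpha\asymp r$ cannot produce the constant $2\pi$, because the implied constants there are not tracked. What you are missing is the strict inequality $r_e<4/\pi$; you only recorded the weaker $r_e=4/\pi+O(\Lambda^{-1})$. The edge equation forces $r_e>1$: if $r_e\le1$, the left side is at most $\kappa r_e\le\pi/2<2$. Hence $\log r_e>0$, and therefore $\kappa r_e<2$. Then $r<r_e<2/\kappa$ and $\alpha/\sin\alpha<\pi/2$ give $2\kappa r\alpha/\sin\alpha<2\pi$ directly. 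This is just your ``crude bound'' $\alpha r/\sin\alpha\le\pi r_e/2$, and it is exactly how the paper argues. A small slip: $r\asymp q^{-1/2}$ gives $\log r\ge-\tfrac12\log T-O(1)$, not $-\tfrac14\log T$. Your conclusion $-\Lambda/4-O(1)$ is nevertheless correct.

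\textbf{The lower bound near the edge.} A uniform implicit function theorem bounds the Taylor coefficients of $r(\alpha)$ and $q(\alpha)$ only from above. Monotonicity gives only $q''(0)\ge0$. Neither yields $q''(0)\gg1$ uniformly in $\Lambda$, which is what $q-q_e\gg\alpha^2$ requires, so your ``$c(\Lambda)\asymp1$'' is asserted rather than derived.

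The paper obtains this bound by dividing $E$ by $\Lambda$ and treating $\delta=1/\Lambda$ as an analytic parameter. This is also the correct way to phrase your uniformity claim: $E/\Lambda$ has uniformly bounded derivatives and an $r$-derivative bounded away from zero, whereas $E_r$ itself has size $\Lambda$. The paper then computes the $\delta=0$ solution $r=2\cos\alpha/\kappa$, $q=\kappa^2/(4\cos^2\alpha)$. Hence the $\alpha^2$ coefficient is $\kappa^2/4+O(\delta)$, bounded away from zero.

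Alternatively, differentiate $E(r(\alpha),\alpha)=0$ twice at $\alpha=0$, using $r'(0)=0$. This gives $r''(0)=-E_{\alpha\alpha}/E_r=-r_e(1+O(\Lambda^{-1}))$, and hence $q''(0)=2r_e^{-2}+O(\Lambda^{-1})$.
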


\begin{proof}
Writing $v=\sqrt u$, the saddle equation $h_q'(u)=0$ becomes
\[
 qv+v^{-1}=\kappa\left(1+\frac{2+2\Log v}{\Lambda}\right).
\]
For $v=re^{-i\alpha}$, its imaginary and real parts give, respectively,
\begin{align}
 qr-r^{-1}&=\frac{2\kappa\alpha}{\Lambda\sin\alpha},
                       \label{eq:saddle-im}\\
 (qr+r^{-1})\cos\alpha
    &=\kappa\left(1+\frac{2+2\log r}{\Lambda}\right).
                       \label{eq:saddle-re}
\end{align}
Eliminating $q$ gives \eqref{eq:saddle-r} and
\eqref{eq:saddle-q}, and the elimination is reversible for
$0<\alpha<\pi/2$.

For fixed $0\le\alpha\le\pi/2$, let $E(r,\alpha)$ be the left side
of \eqref{eq:saddle-r} minus its right side. The continuous endpoint
values of $\alpha\cot\alpha$ are $1$ and $0$. We have
\[
 E_r=-\frac{\Lambda\cos\alpha}{\kappa r^2}-\frac1r<0,
\]
and $E$ tends to $+\infty$ and $-\infty$ as $r$ tends to zero
and infinity. There is therefore exactly one positive solution
$r(\alpha)$. For $0<\alpha<\pi/2$, we also have
\[
 E_\alpha=-\frac{\Lambda\sin\alpha}{\kappa r}
          +\frac{\sin\alpha\cos\alpha-\alpha}{\sin^2\alpha}<0.
\]
Implicit differentiation gives $r'=-E_\alpha/E_r<0$.
Since $\alpha/\sin\alpha$ is strictly increasing, both terms
in \eqref{eq:saddle-q} increase strictly with $\alpha$.

To check that this branch covers the required values of $q$, we
evaluate its endpoints. At $\alpha=0$ the equations give
\eqref{eq:edge-definition}. Direct differentiation of $h_q$ and
the edge equations also give
\[
 h_{q_e}'(r_e^2)=h_{q_e}''(r_e^2)=0,
\]
so the two saddles meet at a point where the quadratic term
vanishes. At the other endpoint we have
\[
 r(\pi/2)=e^{-\Lambda/2-1}=\frac{a_K}{T},
 \qquad q(\pi/2)>\frac{T^2}{a_K^2}.
\]
Since $T^2/a_K^2>T$ for sufficiently large $T$, this branch covers the
whole interval $(q_e,T]$.

The edge equation gives $1<r_e<2/\kappa=4/\pi$. Equation
\eqref{eq:saddle-im} consequently gives
\[
 qr^2=1+\frac{2\kappa r}{\Lambda}\frac{\alpha}{\sin\alpha}
 \le1+\frac{2\pi}{\Lambda}.
\]
It follows that $r\asymp q^{-1/2}$ and
\begin{equation}\label{eq:log-r-range}
 1+\frac{2\log r}{\Lambda}
   =1-\frac{\log q}{\Lambda}+O(\Lambda^{-2}).
\end{equation}
Since $q_e<q\le T$ and $\log T=\Lambda/2+1+\log a_K$, the
left-hand side lies between $1/3$ and $2$ for large $\Lambda$.
Substitution in \eqref{eq:saddle-re} then gives
$\cos\alpha\asymp r$. At the edge, the defining equation yields
$r_e=2/\kappa+O(\Lambda^{-1})$, which gives the asserted expansion
for $q_e$.

To obtain a uniform estimate in $\Lambda$ near the edge, we divide
$E$ by $\Lambda$ and treat $\delta=1/\Lambda$ as another
parameter. The resulting equation is analytic in
$(r,\alpha,\delta)$ near $(2/\kappa,0,0)$, is even in $\alpha$,
and has a nonzero derivative with respect to $r$ there. The analytic
implicit function theorem gives a fixed neighborhood
and uniformly bounded derivatives for all small $\delta\ge0$.
At $\delta=0$, the solution is $r(\alpha,0)=2\cos\alpha/\kappa$,
so \eqref{eq:saddle-q} gives $q(\alpha,0)=\kappa^2/(4\cos^2\alpha)$.
Analytic dependence and evenness in $\alpha$ therefore give
\begin{equation}\label{eq:edge-Taylor}
 q(\alpha,\delta)=q_e(\delta)
   +\left(\frac{\kappa^2}{4}+O(\delta)\right)\alpha^2
   +O(\alpha^4).
\end{equation}
The remainders are uniform, so the quadratic coefficient stays
positive and bounded away from zero. This proves
\eqref{eq:edge-comparison} on a sufficiently small fixed neighborhood.
\end{proof}

\subsection{The global circle estimate}

With the saddle parameters $r$ and $\alpha$ from
Lemma~\ref{lem:saddle-parameters}, write the lower semicircle as
$u=r^2e^{-2i\beta}$, where $0\le\beta\le\pi/2$.
Concavity in the coordinate $\cos\beta$ gives the following uniform
estimate for the decrease of $\Re h_q$ away from the saddle.

\begin{lemma}\label{lem:circle-gap}
There is an absolute constant $c>0$ such that, uniformly for
$q_e<q\le T$ and $0\le\beta\le\pi/2$, we have
\begin{equation}\label{eq:global-gap}
 c(\cos\beta-\cos\alpha)^2
 \le\Re h_q(u_q)-\Re h_q(r^2e^{-2i\beta}).
\end{equation}
In particular, the saddle is the unique maximum of the real part
of the phase on the lower semicircle.
\end{lemma}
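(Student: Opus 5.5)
The plan is to write $\Re h_q$ on the circle explicitly as a function of the single variable $t=\cos\beta\in[0,1]$. I would then show that this function is uniformly strongly concave in $t$ and that its critical point is at $t=\cos\alpha$. The estimate \eqref{eq:global-gap} then follows from Taylor's theorem with $c=1$, or any fixed constant below the concavity bound.

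\emph{Step 1: the formula.} On $u=r^2e^{-2i\beta}$ we have $\sqrt u=re^{-i\beta}$ and $\Log u=2\log r-2i\beta$, with the branch conventions of Section~\ref{sec:geometry}. Substituting into \eqref{eq:phase-definition} and using $\cos 2\beta=2t^2-1$ gives
\[
 \Re h_q(r^2e^{-2i\beta})
 =-2qr^2t^2+2\kappa r\Bigl(1+\frac{2\log r}{\Lambda}\Bigr)t
 -\frac{4\kappa r}{\Lambda}\,g(t)+\bigl(qr^2-2\log r\bigr),
\]
where $g(t)=\beta\sin\beta=\arccos t\,\sqrt{1-t^2}$. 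Call the right side $f(t)$. The constant $qr^2-2\log r$ plays no role.

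\emph{Step 2: concavity.} A direct computation gives $g'(t)=-1-t\arccos t/\sqrt{1-t^2}$. In terms of $\beta$ it then gives
\[
 -g''(t)=\frac{\beta-\sin\beta\cos\beta}{\sin^3\beta}.
\]
This quantity is nonnegative and bounded by an absolute constant on $0\le\beta\le\pi/2$. It tends to $2/3$ as $\beta\to0$ and equals $\pi/2$ at $\beta=\pi/2$. Hence
\[
 f''(t)=-4qr^2+\frac{4\kappa r}{\Lambda}\bigl(-g''(t)\bigr)\le -4+O\Bigl(\frac r\Lambda\Bigr).
\]
Here I use $qr^2\ge1$ from \eqref{eq:saddle-comparisons}. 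Moreover $r\asymp q^{-1/2}$ with $q>q_e=\pi^2/16+O(\Lambda^{-1})$, so $r$ is bounded. After enlarging the threshold on $d$ (equivalently on $\Lambda$), we get $f''\le-2$ uniformly on $[0,1]$ for all $q_e<q\le T$. The endpoint $t=1$ is harmless because $g$ is smooth there in the variable $\beta$ and the bound is uniform.

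\emph{Step 3: the critical point and the conclusion.} Since $u_q$ is a saddle of the analytic function $h_q$, the derivative of $h_q(r^2e^{-2i\beta})$ in $\beta$ vanishes at $\beta=\alpha$. In particular its real part vanishes, so $\frac{d}{d\beta}f(\cos\beta)=0$ at $\beta=\alpha$. Because $0<\alpha<\pi/2$, we have $dt/d\beta=-\sin\alpha\ne0$ there, hence $f'(\cos\alpha)=0$. Taylor's theorem with $f''\le-2$ then gives
\[
 f(\cos\alpha)-f(t)\ge(t-\cos\alpha)^2,
\]
which is \eqref{eq:global-gap}. This gap is positive for $\beta\ne\alpha$, because $\cos$ is injective on $[0,\pi/2]$. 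Therefore the saddle is the unique maximum of $\Re h_q$ on the lower semicircle.

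The main obstacle is Step 2, namely checking that the $\Lambda^{-1}$ correction term cannot destroy concavity anywhere on $[0,\pi/2]$, including near $\beta=\pi/2$ and $\beta=0$. This reduces to the explicit boundedness of $(\beta-\sin\beta\cos\beta)/\sin^3\beta$ together with the boundedness of $r$ from Lemma~\ref{lem:saddle-parameters}. The one point of care is to use the correct branch of $\Log u$, so that the imaginary part is $-2\beta$ on the whole lower semicircle.
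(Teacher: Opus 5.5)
Your proposal is correct and follows the paper's proof essentially line for line. You use the same reduction to a function of $z=\cos\beta$, the same second-derivative formula $-4qr^2+\frac{4\kappa r}{\Lambda}\frac{\beta-\sin\beta\cos\beta}{\sin^3\beta}$ with a bounded quotient, the same vanishing of the first derivative at $\cos\alpha$ deduced from $h_q'(u_q)=0$, and the same integration twice; where the paper quotes $qr^2=1+O(\Lambda^{-1})$, your $qr^2\ge1$ is equally sufficient, and at the endpoint $t=1$ it is enough that $\beta\sin\beta$ is even in $\beta$ (hence smooth in $t$) or simply that the function is continuous there, which is how the paper includes the endpoints.
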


\begin{proof}
Set $z=\cos\beta$ and let $G(z)$ be the real part of
$h_q(r^2e^{-2i\beta})$. Substitution in \eqref{eq:phase-definition}
gives
\[
 G(z)=-qr^2(2z^2-1)
 +2\kappa r\left(1+\frac{2\log r}{\Lambda}\right)z
 -\frac{4\kappa r}{\Lambda}\arccos z\sqrt{1-z^2}-2\log r.
\]
For $0<z<1$, differentiation yields
\[
 G''(z)=-4qr^2+\frac{4\kappa r}{\Lambda}
              \frac{\beta-\sin\beta\cos\beta}{\sin^3\beta},
 \qquad z=\cos\beta.
\]
The quotient is nonnegative and extends continuously to
$0\le\beta\le\pi/2$, with limiting value $2/3$ at zero.
It is therefore bounded independently of the saddle parameters.
Since $qr^2=1+O(\Lambda^{-1})$ and $r$ is bounded, we obtain
$G''(z)\le-c$ for an absolute constant $c>0$. At $z=\cos\alpha$, the first derivative
vanishes because $h_q'(u_q)=0$. Integrating this inequality twice
proves \eqref{eq:global-gap}, with the endpoints included by
continuity.
\end{proof}

For the local approximation, let $\theta_q=-2\alpha$ and
$D_q=u_q^2h_q''(u_q)$. We have the angular expansion
\[
 h_q(r^2e^{i(\theta_q+t)})=h_q(u_q)-\frac12D_qt^2+O(t^3)
 \qquad(t\to0).
\]
The next lemma controls this quadratic coefficient and the remainder
uniformly in $q$.

\begin{lemma}\label{lem:Hessian}
The quadratic coefficient is given by
\begin{equation}\label{eq:D-definition}
 D_q=u_q^2h_q''(u_q)
    =1-\frac{\kappa re^{-i\alpha}}2
                    \left(1+\frac{2\log r-2i\alpha}{\Lambda}\right).
\end{equation}
Its real part and modulus satisfy the uniform estimates
\begin{equation}\label{eq:D-bounds}
 \Re D_q\asymp\sin^2\alpha,
 \qquad |D_q|\asymp\sin\alpha,
 \qquad \Re D_q>0
\end{equation}
for $q_e<q\le T$. All angular derivatives of $h_q(r^2e^{i\theta})$ of orders one
through three are uniformly bounded for $-\pi\le\theta\le0$.
\end{lemma}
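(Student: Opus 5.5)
Compute $u^2h_q''(u)$ directly from \eqref{eq:phase-definition}, then simplify it with the saddle equation. Writing $v=\sqrt u$, we have $u\,\frac{d}{du}=\frac12 v\frac{d}{dv}$. A direct differentiation gives
\[
 u h_q'(u)=-qu+\kappa v\Bigl(1+\frac{2+2\Log v}{\Lambda}\Bigr)-1,
\]
and applying $u\frac{d}{du}$ again yields
\[
 u\frac{d}{du}\bigl(uh_q'\bigr)=-qu+\frac{\kappa v}{2}\Bigl(1+\frac{4+2\Log v}{\Lambda}\Bigr).
\]
Since $u^2h_q''=u\frac{d}{du}(uh_q')-uh_q'$ and $uh_q'(u_q)=0$, we get
\[
 D_q=-qu_q+\frac{\kappa v}{2}\Bigl(1+\frac{4+2\Log v}{\Lambda}\Bigr)
\]
at the saddle. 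The saddle equation $qu=\kappa v(1+(2+2\Log v)/\Lambda)-1$ then gives
\[
 D_q=1-\frac{\kappa v}{2}\Bigl(1+\frac{2\Log v}{\Lambda}\Bigr).
\]
With $\Log v=\log r-i\alpha$, this is the two-$\Log$ form, which is \eqref{eq:D-definition} once $2\Log v=\Log u=2\log r-2i\alpha$ is inserted. I will double-check the constant bookkeeping here, since it is the only place where the exact identity could fail.

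For \eqref{eq:D-bounds}, I take real and imaginary parts. Set $L=1+2\log r/\Lambda$. Then
\[
 \Re D_q=1-\frac{\kappa r}{2}\Bigl(L\cos\alpha-\frac{2\alpha\sin\alpha}{\Lambda}\Bigr),
 \qquad
 \Im D_q=\frac{\kappa r}{2}\Bigl(L\sin\alpha+\frac{2\alpha\cos\alpha}{\Lambda}\Bigr).
\]
Since $L\in[1/3,2]$ (shown in the proof of Lemma~\ref{lem:saddle-parameters}) and $r\asymp q^{-1/2}$ is bounded, we get $\Im D_q\asymp r\sin\alpha$ plus a nonnegative term. I then eliminate $L$ using \eqref{eq:saddle-re} together with \eqref{eq:saddle-im}. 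These give $\kappa rL\cos\alpha=(qr^2+1)\cos^2\alpha-2\kappa r\cos^2\alpha/\Lambda$, and $qr^2=1+2\kappa r\alpha/(\Lambda\sin\alpha)$. Substituting, I expect
\[
 \Re D_q=\sin^2\alpha-\frac{\kappa r\alpha\cos^2\alpha}{\Lambda\sin\alpha}+\frac{\kappa r\cos^2\alpha}{\Lambda}+\frac{\kappa r\alpha\sin\alpha}{\Lambda}
\]
up to the exact coefficients. The terms combine into $\sin^2\alpha$ plus $\frac{\kappa r}{\Lambda}$ times the expression
\[
 \cos^2\alpha\Bigl(1-\frac{\alpha}{\sin\alpha}\Bigr)+\alpha\sin\alpha .
\]
This expression is $\alpha^2/2+\cdots\ge0$ near $0$, and it is comparable to $\sin^2\alpha$ on $[0,\pi/2]$. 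That proves $\Re D_q\asymp\sin^2\alpha$ and $\Re D_q>0$.

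For the modulus, $|D_q|\ge\Im D_q\gg r\sin\alpha$, while trivially $|D_q|\ll\sin\alpha+\ldots$. The main obstacle is that $r$ is not bounded below: $r\asymp q^{-1/2}$ reaches $T^{-1/2}$. For large $q$, however, $\cos\alpha\asymp r$ is small, so $\alpha$ is near $\pi/2$ and $\sin\alpha\asymp1$. There $D_q=1+O(r)$, giving $|D_q|\asymp1\asymp\sin\alpha$ directly. For bounded $q$, $r\asymp1$ and the $\Im$ bound suffices. The upper bound $|D_q|\ll\sin\alpha$ for small $\alpha$ follows from $\Re D_q\ll\sin^2\alpha$ and $\Im D_q\ll\sin\alpha+\alpha/\Lambda\ll\sin\alpha$. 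I will split the range at a fixed $q_1$ to handle these two regimes.

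Finally, for the angular derivatives, set $u=r^2e^{i\theta}$. Each derivative $\partial_\theta^k h_q$ is a combination of $qr^2e^{i\theta}$, of $\kappa re^{i\theta/2}(1+(2\log r+i\theta)/\Lambda)$ and its $\theta$-derivatives, and of constants. All of these are bounded because $qr^2\le1+2\pi/\Lambda$, $r\ll1$, $|\theta|\le\pi$, and $|\log r|/\Lambda\ll1$ by \eqref{eq:log-r-range}.
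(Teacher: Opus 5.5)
Your route is the paper's own. You differentiate \eqref{eq:phase-definition}, split $D_q$ into real and imaginary parts, eliminate $L=1+2\log r/\Lambda$ and $qr^2$ using \eqref{eq:saddle-im}--\eqref{eq:saddle-re}, and treat the modulus by regimes. Splitting by $q$ and using $D_q=1+O(r)$ for large $q$ is equivalent to the paper's split by $\alpha$. Your formula for $D_q$ is correct, as are your bounds on $\Im D_q$, on $|D_q|$, and on the angular derivatives. Note that $u^2h_q''(u)=1-\frac{\kappa}{2}\sqrt u\,(1+\Log u/\Lambda)$ holds identically in $u$, so the saddle equation is not needed for \eqref{eq:D-definition}.

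The one actual error is in the elimination step for $\Re D_q$. Equation \eqref{eq:saddle-re} gives $\kappa rL=(qr^2+1)\cos\alpha-2\kappa r/\Lambda$. After multiplying by $\cos\alpha$, the last term is therefore $2\kappa r\cos\alpha/\Lambda$, not $2\kappa r\cos^2\alpha/\Lambda$. Carrying this through with $qr^2=1+2\kappa r\alpha/(\Lambda\sin\alpha)$ gives
\[
 \Re D_q=\sin^2\alpha+\frac{\kappa r}{\Lambda}\Bigl(\cos\alpha-\frac{\alpha\cos^2\alpha}{\sin\alpha}+\alpha\sin\alpha\Bigr)
 =\sin^2\alpha+\frac{\kappa r}{\Lambda}\,\frac{\sin\alpha\cos\alpha-\alpha\cos(2\alpha)}{\sin\alpha},
\]
which is \eqref{eq:D-real}. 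Near zero the bracket is $\frac43\alpha^2+O(\alpha^4)$, not $\alpha^2/2$.

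Your conclusion survives, because the discrepancy $\frac{\kappa r}{\Lambda}\cos\alpha(1-\cos\alpha)$ is itself $O(\sin^2\alpha)$. However, you should verify the claimed property for the correct expression rather than infer it from a small-$\alpha$ expansion:
\begin{itemize}
\item The numerator $\sin\alpha\cos\alpha-\alpha\cos(2\alpha)$ vanishes at $0$ and has derivative $2\alpha\sin(2\alpha)\ge0$, so the bracket is nonnegative on $[0,\pi/2]$.
\item Its quotient by $\sin^2\alpha$ extends continuously to the closed interval with value $4/3$ at $0$, so it is bounded.
\end{itemize}
Together these give $\sin^2\alpha\le\Re D_q\le(1+C\Lambda^{-1})\sin^2\alpha$. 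The essential point, which you did identify, is that the $\Lambda^{-1}$ correction vanishes to second order at $\alpha=0$. Without this, the upper bound $\Re D_q\ll\sin^2\alpha$ would fail near the edge.
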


\begin{proof}
Differentiation of \eqref{eq:phase-definition} gives
$u^2h_q''(u)=1-(\kappa/2)\sqrt u(1+\Log u/\Lambda)$,
which proves \eqref{eq:D-definition}. We now use the saddle
equations \eqref{eq:saddle-im}--\eqref{eq:saddle-re} to write
its real and imaginary parts in the form
\begin{align}
 \Re D_q&=\sin^2\alpha+
  \frac{\kappa r}{\Lambda}
  \left(\cos\alpha-\frac{\alpha\cos(2\alpha)}{\sin\alpha}\right),
                           \label{eq:D-real}\\
 \Im D_q&=\frac{\kappa r}{2}
  \left[\left(1+\frac{2\log r}{\Lambda}\right)\sin\alpha
                  +\frac{2\alpha\cos\alpha}{\Lambda}\right].
                           \label{eq:D-imag}
\end{align}
The parenthesis in \eqref{eq:D-real} is nonnegative and
$O(\sin^2\alpha)$. Indeed, after division by $\sin^2\alpha$ it is
\[
 \frac{\sin\alpha\cos\alpha-\alpha\cos(2\alpha)}{\sin^3\alpha},
\]
which extends continuously to the closed interval, with value $4/3$
at zero. Its numerator vanishes at zero and has derivative
$2\alpha\sin(2\alpha)\ge0$. This proves the bounds for the real
part, while \eqref{eq:D-imag} gives $|\Im D_q|\le C\sin\alpha$.
Near $\alpha=0$, the quantities $r$
and $1+2\log r/\Lambda$ are bounded below by positive constants,
which gives $\Im D_q\ge c\sin\alpha$. Away from zero the real
part supplies the lower bound for $|D_q|$, completing the proof
of \eqref{eq:D-bounds}.

Up to an additive constant, the angular phase is a linear
combination of $e^{i\theta}$, $e^{i\theta/2}$,
$\theta e^{i\theta/2}$, and $\theta$. Its coefficients are bounded
because $qr^2$, $r$, and $r|\log r|/\Lambda$ are bounded by
Lemma~\ref{lem:saddle-parameters}. These functions and their first
three derivatives are bounded on $[-\pi,0]$, proving the remaining
assertion.
\end{proof}

To keep the local interval inside the semicircle, we take its
angular half-width to be
\begin{equation}\label{eq:ell-definition}
 \ell_q=\epsilon_0\min\{\alpha,\cos\alpha\},
\end{equation}
for a sufficiently small fixed $\epsilon_0>0$. We rescale this
interval to $[-1,1]$ and normalize the quadratic coefficient to
have modulus one. Here $\|f\|_{C^k}$ denotes the maximum of the
sup norms of the derivatives of orders zero through $k$.

\begin{corollary}\label{cor:local-geometry}
The interval $|\theta-\theta_q|\le\ell_q$ lies in $(-\pi,0)$.
On the rest of the lower semicircle, we have
\begin{equation}\label{eq:discard-gap}
 \Re h_q(r^2e^{i\theta})
 \le\Re h_q(u_q)-c_g\min\{(q-q_e)^2,q^{-1}\},
\end{equation}
where $c_g>0$ is absolute after $\epsilon_0$ is fixed. On $[-1,1]$, put
\begin{equation}\label{eq:p-definition}
 p_q(y)=\frac{h_q(r^2e^{i(\theta_q+\ell_qy)})-h_q(u_q)}
                   {|D_q|\ell_q^2}.
\end{equation}
These functions have uniformly bounded $C^3$ norms and satisfy
\begin{equation}\label{eq:p-properties}
 p_q(0)=p_q'(0)=0,\quad
 p_q''(0)=-\frac{D_q}{|D_q|},\quad
 \Re p_q\le0,\quad |p_q'(y)|\ge\frac{|y|}{2}.
\end{equation}
\end{corollary}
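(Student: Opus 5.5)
We verify the four claims in turn, using only Lemmas~\ref{lem:saddle-parameters}, \ref{lem:circle-gap}, and \ref{lem:Hessian}.

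\emph{Containment.} The interval $|\theta-\theta_q|\le\ell_q$ lies in $(-\pi,0)$ because $\theta_q=-2\alpha$ with $0<\alpha<\pi/2$. Its right end is $-2\alpha+\ell_q\le-2\alpha+\epsilon_0\alpha<0$. Its left end is $-\pi+(\pi-2\alpha)-\ell_q$, and $\pi-2\alpha\ge c\cos\alpha$ since $\cos\alpha=\sin(\pi/2-\alpha)\le\pi/2-\alpha$. Hence the left end exceeds $-\pi$ once $\epsilon_0<2$.

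\emph{The gap \eqref{eq:discard-gap}.} We apply Lemma~\ref{lem:circle-gap} with $\theta=-2\beta$. Off the local interval we have $|\beta-\alpha|\ge\ell_q/2$, so we need a lower bound for $|\cos\beta-\cos\alpha|$. By the mean value theorem, $|\cos\beta-\cos\alpha|\ge\min_{\xi}\sin\xi\cdot|\beta-\alpha|$, where $\xi$ ranges between $\alpha$ and $\beta$. Split according to which term realizes $\min\{\alpha,\cos\alpha\}$.
\begin{itemize}
\item When $\alpha$ is small, $\ell_q\asymp\alpha$. For $\beta\ge\alpha+\ell_q/2$ monotonicity gives $\cos\alpha-\cos\beta\ge\cos\alpha-\cos(\alpha+\ell_q/2)\gg\alpha\ell_q\asymp\alpha^2$, and $\beta\le\alpha-\ell_q/2$ is handled similarly. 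The result is a gap $\gg\alpha^4$.
\item When $\alpha$ is near $\pi/2$, $\sin\xi\asymp1$ and the gap is $\gg\ell_q^2\asymp\cos^2\alpha$.
\end{itemize}
Now convert to $q$. Near the edge \eqref{eq:edge-comparison} gives $\alpha^4\asymp(q-q_e)^2$, and \eqref{eq:saddle-comparisons} gives $\cos^2\alpha\asymp r^2\asymp q^{-1}$. In the intermediate range $\alpha\asymp1$, both quantities are bounded below by absolute constants, and the monotonicity of $q$ in $\alpha$ makes the relevant regimes match, so the gap is $\gg\min\{(q-q_e)^2,q^{-1}\}$. The main care here is uniformity near the edge; there we use that \eqref{eq:edge-comparison} holds with absolute constants.

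\emph{Properties of $p_q$.} The facts $p_q(0)=p_q'(0)=0$ come from $h_q'(u_q)=0$. The chain rule gives $p_q''(0)=-D_q\ell_q^2/(|D_q|\ell_q^2)$. The condition $\Re p_q\le0$ follows from Lemma~\ref{lem:circle-gap}, since the saddle maximizes $\Re h_q$ on the semicircle.

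For the $C^3$ bounds, the $k$-th derivative of $p_q$ is $\ell_q^k/(|D_q|\ell_q^2)$ times the angular derivative, which is bounded by Lemma~\ref{lem:Hessian}. We need $\ell_q/|D_q|$ to be bounded for $k=3$. This holds because $|D_q|\asymp\sin\alpha\gg\alpha$ by \eqref{eq:D-bounds} and $\ell_q\le\epsilon_0\alpha$, so $\ell_q/|D_q|\le C\epsilon_0$. The cases $k\le2$ follow from Taylor expansion about $0$ on $[-1,1]$.

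For the lower bound, write $p_q'(y)=p_q''(0)y+O(\ell_q|D_q|^{-1}y^2)$, using the third-derivative bound. The error is at most $C\epsilon_0y^2\le C\epsilon_0|y|$, while $|p_q''(0)y|=|y|$. Choosing $\epsilon_0$ small makes $C\epsilon_0\le1/2$, which gives $|p_q'(y)|\ge|y|/2$. Fixing $\epsilon_0$ in this way then makes $c_g$ absolute.
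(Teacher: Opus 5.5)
Your proposal is correct and follows essentially the same route as the paper: the containment via $\cos\alpha\le\pi/2-\alpha$, and the mean-value and monotonicity reduction of Lemma~\ref{lem:circle-gap} to a gap of order $\ell_q^2\sin^2\alpha$. The paper states the latter as the single bound $|\cos\beta-\cos\alpha|\ge c\ell_q\sin\alpha$ instead of splitting into cases. The rest also matches: the three regimes in $q$, and the bound $\ell_q/|D_q|\le C\epsilon_0$ feeding Taylor's theorem for $|p_q'(y)|\ge|y|/2$.

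The one step to tighten is the intermediate range, where the paper makes your ``regimes match'' remark explicit. There, monotonicity and \eqref{eq:edge-comparison} give $\alpha\ge c\sqrt{t_0}$ for $q\ge q_e+t_0$, while $\cos\alpha\asymp r\asymp q^{-1/2}$ gives $\cos\alpha\ge cq_1^{-1/2}$ for $q\le q_1$.
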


\begin{proof}
The distances from $\theta_q$ to $0$ and $-\pi$ are $2\alpha$
and $2(\pi/2-\alpha)$, respectively. Since
$\cos\alpha\le\pi/2-\alpha$, the local interval stays inside the
semicircle. At its endpoints, the corresponding change in $\beta$
has size $\ell_q/2$. The mean value theorem and
$\ell_q\le\epsilon_0\alpha$ give
\[
 |\cos\beta-\cos\alpha|\ge c\ell_q\sin\alpha.
\]
Monotonicity of cosine gives the same bound outside the local
interval. Lemma~\ref{lem:circle-gap} therefore gives a gap of at
least $c\ell_q^2\sin^2\alpha$.
Near the edge, this gives a lower bound of order
$\alpha^4\asymp(q-q_e)^2$.
For $q\ge q_1$, where $q_1$ is a sufficiently large fixed absolute
constant, it gives a lower bound of order
$\cos^2\alpha\asymp q^{-1}$.
On the intervening interval $q_e+t_0\le q\le q_1$, monotonicity and
\eqref{eq:edge-comparison} give $\alpha\ge c\sqrt{t_0}$.
The bounds $r\ge q_1^{-1/2}$ and $\cos\alpha\asymp r$ also imply
$\cos\alpha\ge c q_1^{-1/2}$. Thus $\ell_q$ and $\sin\alpha$ are
bounded below by positive absolute constants on this interval.
This proves \eqref{eq:discard-gap}.

The value and first two derivatives of $p_q$ at zero follow from
the angular expansion at the saddle, while the maximality in
Lemma~\ref{lem:circle-gap} gives $\Re p_q\le0$.
To control the remaining derivatives, use Lemma~\ref{lem:Hessian}
and $\ell_q/|D_q|\le C\epsilon_0\alpha/\sin\alpha\le C\epsilon_0$
to obtain
\[
 \sup_{[-1,1]}|p_q'''|\le C\epsilon_0.
\]
Since $|p_q''(0)|=1$, Taylor's theorem then gives
$|p_q'(y)|\ge|y|-C\epsilon_0|y|^2\ge|y|/2$ when $\epsilon_0$ is
sufficiently small. Integrating from zero also bounds the lower
derivatives, proving the uniform $C^3$ estimate.
\end{proof}

\subsection{The phase at the saddle}

The oscillation is governed by $S(q):=\Im h_q(u_q)$.
We compute this function exactly so that its change over a growing
interval can be estimated from the endpoints.

\begin{lemma}\label{lem:action}
The function $S$ satisfies
\begin{equation}\label{eq:exact-action}
 S(q)=2\alpha-\sin(2\alpha)
      +\frac{4\kappa r}{\Lambda}
                       (\sin\alpha-\alpha\cos\alpha).
\end{equation}
It extends continuously with value zero at $q_e$, and its derivative satisfies
\begin{equation}\label{eq:action-derivative}
 S'(q)=r^2\sin(2\alpha)>0.
\end{equation}
For fixed absolute $t_0>0$ sufficiently small and $q_1$ sufficiently
large, we have
\begin{align}
 0\le S(q_e+t)&\le Ct^{3/2} &&(0<t\le t_0),
                                    \label{eq:action-edge}\\
 S(q)&\ge\pi-Cq^{-1/2} &&(q_1\le q\le T).
                                    \label{eq:action-tail}
\end{align}
\end{lemma}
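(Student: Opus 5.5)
The plan is to evaluate $h_q(u_q)$ directly in the coordinates of Lemma~\ref{lem:saddle-parameters} and then simplify with one of the saddle equations. With $\sqrt{u_q}=re^{-i\alpha}$ and $\Log u_q=2\log r-2i\alpha$ (lower branch), the four terms of \eqref{eq:phase-definition} have imaginary parts
\[
 qr^2\sin(2\alpha),\qquad -2\kappa r\sin\alpha,\qquad
 -\frac{2\kappa r}{\Lambda}\bigl(2\log r\sin\alpha+2\alpha\cos\alpha\bigr),\qquad 2\alpha .
\]
The second and third terms combine to
$-2r\sin\alpha\cdot\kappa(1+2\log r/\Lambda)-4\kappa r\alpha\cos\alpha/\Lambda$.
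Next, \eqref{eq:saddle-re} gives
$\kappa(1+2\log r/\Lambda)=(qr+r^{-1})\cos\alpha-2\kappa/\Lambda$.
Substituting this, the contribution $-(qr^2+1)\sin(2\alpha)+4\kappa r\sin\alpha/\Lambda$ appears. The $qr^2\sin(2\alpha)$ terms then cancel, and we obtain exactly \eqref{eq:exact-action}. This step needs no estimate, only the real part of the saddle equation.

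For the derivative, I will use the envelope principle. Since $u_q$ depends smoothly on $q$ (by the implicit differentiation in Lemma~\ref{lem:saddle-parameters}) and $h_q'(u_q)=0$, we have
\[
 \frac{d}{dq}h_q(u_q)=(\partial_q h_q)(u_q)=-u_q .
\]
Taking imaginary parts gives $S'(q)=-\Im u_q=r^2\sin(2\alpha)$, which is positive for $0<\alpha<\pi/2$. As $q\downarrow q_e$ we have $\alpha\downarrow0$, with $r$ converging to $r_e$ by monotonicity and continuity. Each term of \eqref{eq:exact-action} then tends to zero, giving the continuous extension $S(q_e)=0$. Positivity of $S$ on $(q_e,T]$ follows from $S'>0$; alternatively, both $2\alpha-\sin 2\alpha$ and $\sin\alpha-\alpha\cos\alpha$ are nonnegative.

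For \eqref{eq:action-edge}, I will Taylor expand. We have $2\alpha-\sin2\alpha=O(\alpha^3)$ and $\sin\alpha-\alpha\cos\alpha=O(\alpha^3)$, and $r$ is bounded, so $S=O(\alpha^3)$. Then \eqref{eq:edge-comparison} gives $\alpha\ll t^{1/2}$, hence $S(q_e+t)\le Ct^{3/2}$.

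For \eqref{eq:action-tail}, I will use \eqref{eq:saddle-comparisons}: $\cos\alpha\asymp r\asymp q^{-1/2}$. This gives $\pi/2-\alpha\ll q^{-1/2}$, so $2\alpha-\sin 2\alpha\ge\pi-Cq^{-1/2}$. The $\Lambda^{-1}$ term is nonnegative because $\tan\alpha\ge\alpha$, so it can be dropped.

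I expect the main obstacle to be the bookkeeping in the first step. The branch conventions must be applied consistently: $\Im\Log u_q=-2\alpha$, and the sign of $\Im(\sqrt u\Log u)$ must be tracked carefully. One also has to choose the right saddle equation to eliminate $\log r$; using \eqref{eq:saddle-im} instead would leave $q$ in the formula. A further point needing care is the justification of differentiability of $q\mapsto u_q$, which comes from the strict monotonicity of $q$ in $\alpha$ and the implicit function argument already given in Lemma~\ref{lem:saddle-parameters}.
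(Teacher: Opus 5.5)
Your proposal is correct and follows the paper's proof essentially step for step. It uses the same direct evaluation of $\Im h_q(u_q)$ simplified via \eqref{eq:saddle-re}, the same envelope argument $S'(q)=\Im(-u_q)$, and the same $O(\alpha^3)$ and $\nu=\pi/2-\alpha$ endpoint estimates with the nonnegative $\Lambda^{-1}$ term dropped. You also flag the differentiability of $q\mapsto u_q$, which the paper leaves implicit; it follows either from $dq/d\alpha>0$ (because $r'<0$ strictly) or from the implicit function theorem, since $h_q''(u_q)=D_q/u_q^2\neq0$.
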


\begin{proof}
Substituting the saddle into \eqref{eq:phase-definition} gives
\begin{align*}
 S(q)={}&qr^2\sin(2\alpha)
 -2\kappa r\left(1+\frac{2\log r}{\Lambda}\right)\sin\alpha
 -\frac{4\kappa r\alpha}{\Lambda}\cos\alpha+2\alpha.
\end{align*}
Equation \eqref{eq:saddle-re} implies
\[
 \kappa r\left(1+\frac{2\log r}{\Lambda}\right)
 =(qr^2+1)\cos\alpha-\frac{2\kappa r}{\Lambda}.
\]
Substituting this identity into the preceding display gives
\eqref{eq:exact-action}. To differentiate $S$, we use the fact
that $h_q'(u_q)=0$, obtaining
\[
 S'(q)=\Im\left(-u_q+h_q'(u_q)u_q'\right)
       =\Im(-u_q)=r^2\sin(2\alpha).
\]
Both $2\alpha-\sin(2\alpha)$ and
$\sin\alpha-\alpha\cos\alpha$ are nonnegative and
$O(\alpha^3)$ near zero. Lemma~\ref{lem:saddle-parameters} proves
\eqref{eq:action-edge} and the continuous value at the edge.

For the other endpoint, put $\nu=\pi/2-\alpha$ and drop the
nonnegative last term of \eqref{eq:exact-action} to obtain
\[
 S(q)\ge 2\alpha-\sin(2\alpha)
      =\pi-2\nu-\sin(2\nu)\ge\pi-4\nu.
\]
Since $\nu\asymp\cos\alpha\asymp r\asymp q^{-1/2}$ for large
$q$, this gives \eqref{eq:action-tail}.
\end{proof}

\section{The amplitude in the Cauchy integral}\label{sec:amplitude}

Stirling's formula separates the integrand into $e^{dh_q}$ and a
remaining analytic factor, the \emph{amplitude}. Away from
the saddles, the decrease of the exponential will dominate the
polynomial growth allowed for this amplitude. Near the saddles,
we use the stronger bounds supplied by the Euler product in
$\Re s\ge4$. The following lemma isolates the gamma remainder
and bounds it using the uniform complex form of Stirling's
formula \cite[Section~5.11]{DLMF}.

\begin{lemma}\label{lem:Stirling-amplitude}
Put
\begin{equation}\label{eq:amplitude-constants}
 \mu_K=2-\frac{r_1}{4},\qquad
 c_K=2^{3r_1/4+r_2-1}\pi^{r_1/4}\Delta_K^{1/4}.
\end{equation}
There are $W_0>0$ and a holomorphic nonvanishing function $g_K$
on an open neighborhood of
$\{w:|\arg w|\le\pi/2,\ |w|\ge W_0\}$ for which
\begin{equation}\label{eq:Stirling-identity}
 \xi_K\!\left(\frac12+w\right)
 =c_Kw^{\mu_K}
    \exp\!\left\{\frac{Nw}{2}
                  \left(\Log\frac{w}{a_K}-1\right)\right\}
       \zeta_K\!\left(\frac12+w\right)g_K(w).
\end{equation}
All powers use the principal logarithm. A single holomorphic
logarithm of $g_K$ satisfies
\begin{equation}\label{eq:g-bounds}
 \log g_K(w)=O_K(|w|^{-1}),\qquad
 (\log g_K)'(w)=O_K(|w|^{-2})
 \qquad(|\arg w|\le\pi/2).
\end{equation}
\end{lemma}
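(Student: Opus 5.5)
We write out $\xi_K(1/2+w)$ factor by factor, apply Stirling's formula to each gamma factor with $s=1/2+w$, and define $g_K$ as the quotient of the two sides of \eqref{eq:Stirling-identity}.

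\textbf{Step 1: the elementary factors.} The definition \eqref{eq:xi-K-definition} gives
\[
\xi_K(\tfrac12+w)=\tfrac12\bigl(w^2-\tfrac14\bigr)\Delta_K^{1/4+w/2}\,\pi^{-r_1(1/4+w/2)}\,\Gamma\bigl(\tfrac14+\tfrac w2\bigr)^{r_1}\,2^{r_2}(2\pi)^{-r_2(1/2+w)}\,\Gamma\bigl(\tfrac12+w\bigr)^{r_2}\,\zeta_K(\tfrac12+w).
\]

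\textbf{Step 2: Stirling.} For $|\arg w|\le\pi/2$ and $|w|\ge W_0$, the arguments $1/4+w/2$ and $1/2+w$ lie in a sector $|\arg z|\le\pi/2+o(1)$, uniformly away from the negative axis. On an open neighborhood of this set we apply
\[
\Log\Gamma(z+a)=(z+a-\tfrac12)\Log z-z+\tfrac12\log2\pi+O(|z|^{-1}),
\]
taking $z=w/2$, $a=1/4$ for the real factors and $z=w$, $a=1/2$ for the complex ones. This gives:
\begin{itemize}
\item[] $\Gamma(1/4+w/2)=\sqrt{2\pi}\,(w/2)^{w/2-1/4}e^{-w/2}\,e^{O(1/|w|)}$;
\item[] $\Gamma(1/2+w)=\sqrt{2\pi}\,w^{w}e^{-w}\,e^{O(1/|w|)}$.
\end{itemize}
We then collect the terms linear in $w$ inside the exponent:
\[
\tfrac{r_1w}{2}\bigl(\Log\tfrac{w}{2\pi}-1\bigr)+r_2w\bigl(\Log\tfrac{w}{2\pi}-1\bigr)+\tfrac w2\log\Delta_K.
\]
Here we used $(w/2)^{w/2}\pi^{-w/2}=\exp\{\tfrac w2\Log\tfrac{w}{2\pi}\}$ and $(2\pi)^{-w}w^w=\exp\{w\Log\tfrac{w}{2\pi}\}$. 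Since $r_1+2r_2=N$, this sum equals
\[
\tfrac{Nw}{2}\Bigl(\Log\tfrac w{2\pi}+\tfrac1N\log\Delta_K-1\Bigr)=\tfrac{Nw}{2}\Bigl(\Log\tfrac{w}{a_K}-1\Bigr),
\]
by \eqref{eq:aK-definition}.

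\textbf{Step 3: powers of $w$ and constants.} The power of $w$ is $2$ from $w^2-1/4$, plus $-r_1/4$ from the real factors; the complex factors contribute none. This gives $\mu_K=2-r_1/4$. The constant is the product of the following:
\begin{itemize}
\item[] $\tfrac12$;
\item[] $\Delta_K^{1/4}$;
\item[] $\pi^{-r_1/4}(2\pi)^{r_1/2}\,2^{r_1/4}$ from the real factors, where $2^{r_1/4}$ comes from $(w/2)^{-1/4}$;
\item[] $2^{r_2}(2\pi)^{-r_2/2}(2\pi)^{r_2/2}=2^{r_2}$ from the complex factors.
\end{itemize}
Simplifying gives $2^{-1+3r_1/4+r_2}\pi^{r_1/4}\Delta_K^{1/4}=c_K$, matching \eqref{eq:amplitude-constants}.

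\textbf{Step 4: defining $g_K$.} We set
\[
\log g_K(w)=\log\bigl(1-\tfrac1{4w^2}\bigr)+\sum(\text{Stirling remainders}),
\]
using principal branches. Each term is holomorphic on the neighborhood and $O(|w|^{-1})$, which proves the first bound in \eqref{eq:g-bounds}. Consequently $g_K$ is holomorphic and nonvanishing, and the identity \eqref{eq:Stirling-identity} holds exactly by construction.

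\textbf{Step 5: the derivative bound.} For $(\log g_K)'=O(|w|^{-2})$, we use Cauchy's estimate on discs of radius $c|w|$ contained in the slightly larger sector where the $O(|w|^{-1})$ bound still holds. Alternatively, one can differentiate the Stirling series for $\psi$ directly, since its remainder is $O(|z|^{-2})$.

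The main point needing care is uniformity near $\arg w=\pm\pi/2$. The shifted arguments stay in $|\arg z|\le\pi-\delta$, so Stirling applies uniformly there. The branch bookkeeping is consistent: $\Log(w/2)=\Log w-\log2$ holds there because $w/2$ and $w$ have the same argument.
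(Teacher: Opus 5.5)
Your proposal is correct and follows essentially the same route as the paper. Like the paper, you factor out the Stirling main terms of $\Gamma(1/4+w/2)$ and $\Gamma(1/2+w)$, absorb $(1-\frac{1}{4w^2})$ and the remainders into $g_K$, and get the derivative bound from Cauchy's estimate on discs of radius $\asymp|w|$ in a slightly larger sector. The only difference is cosmetic: you define $\log g_K$ directly as the sum of the principal-branch Stirling remainders, while the paper takes the logarithm of $g_K$ about $1$ after choosing $W_0$ so that $|g_K-1|<1/2$. Your computations of $\mu_K$ and $c_K$ check out.
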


\begin{proof}
Separate the two gamma remainders by setting
\[
 g_{\mathbb R}(w)=
 \frac{\Gamma(w/2+1/4)}
 {\sqrt{2\pi}(w/2)^{w/2-1/4}e^{-w/2}},\qquad
 g_{\mathbb C}(w)=
 \frac{\Gamma(w+1/2)}{\sqrt{2\pi}w^we^{-w}}.
\]
Then the identity is exact with
\[
 g_K(w)=\left(1-\frac1{4w^2}\right)
        g_{\mathbb R}(w)^{r_1}g_{\mathbb C}(w)^{r_2}.
\]
Indeed, substitution in \eqref{eq:xi-K-definition} gives the
power $w^{2-r_1/4}$ and the positive constant $c_K$ above.
The exponential terms combine as
\[
 \frac{Nw}{2}\left(\Log\frac{w}{2\pi}-1\right)
       +\frac w2\log\Delta_K
 =\frac{Nw}{2}\left(\Log\frac{w}{a_K}-1\right).
\]
Uniform Stirling asymptotics give
$g_{\mathbb R}(w)=1+O(|w|^{-1})$ and
$g_{\mathbb C}(w)=1+O(|w|^{-1})$ in the larger sector
$|\arg w|\le2\pi/3$. Thus $g_K(w)=1+O_K(|w|^{-1})$ there.
Choose $W_0$ large enough that $|g_K(w)-1|<1/2$ in this sector.
The convergent logarithm about $1$ gives a single holomorphic
logarithm of size $O_K(|w|^{-1})$. For $|\arg w|\le\pi/2$,
a disk of radius a sufficiently small fixed multiple of $|w|$
stays in the larger sector and away from zero. Cauchy's derivative
estimate on this disk proves the second bound in \eqref{eq:g-bounds}.
\end{proof}

We restrict $q\le c_0T$, where $0<c_0\le1$ will be chosen small
enough for the local estimates below.
On the lower semicircle, define
\begin{equation}\label{eq:a-definition}
 w_q(\theta)=Tr e^{i\theta/2},\qquad
 a_q(\theta)=c_Kw_q(\theta)^{\mu_K}
     \zeta_K\!\left(\frac12+w_q(\theta)\right)g_K(w_q(\theta))
\end{equation}
for $-\pi\le\theta\le0$. Since
$Tr\gg T/\sqrt{c_0T}\to\infty$, the preceding lemma applies on
the whole semicircle. The identity \eqref{eq:scale-exponent}
then gives the exact factorization
\begin{equation}\label{eq:integrand-factorization}
 F(T^2u)e^{-dqu}u^{-d}=a_q(\theta)e^{dh_q(u)},
 \qquad u=r^2e^{i\theta}.
\end{equation}
Since $d$ is an integer, the factor $e^{-d\Log u}$ agrees with
$u^{-d}$ on either logarithmic branch. The following lemma
bounds the amplitude on the whole semicircle and its variation
across the local saddle interval.

\begin{lemma}\label{lem:amplitude-bounds}
For $c_0$ sufficiently small, the amplitude satisfies
\begin{equation}\label{eq:a-at-saddle}
 |a_q(\theta_q)|\asymp_K(Tr)^{\mu_K},\qquad
 \sup_{-\pi\le\theta\le0}
       \frac{|a_q(\theta)|}{|a_q(\theta_q)|}\le C_KT^N
\end{equation}
uniformly for $q_e<q\le c_0T$. Moreover, the normalized local amplitude
\begin{equation}\label{eq:f-definition}
 f_q(y)=\frac{a_q(\theta_q+\ell_qy)}{a_q(\theta_q)},
 \qquad -1\le y\le1,
\end{equation}
satisfies $f_q(0)=1$ and $\|f_q\|_{C^1}\le C$.
\end{lemma}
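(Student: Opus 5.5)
The plan is to treat the three factors of $a_q$ in \eqref{eq:a-definition} separately: the power $w^{\mu_K}$, the zeta value, and the Stirling remainder $g_K$. Lemma~\ref{lem:Stirling-amplitude} controls $g_K$ uniformly on the semicircle. Indeed, \eqref{eq:g-bounds} gives $g_K=1+O(|w|^{-1})$ and $|g_K|\asymp1$, because $|w_q(\theta)|=Tr\gg\sqrt{T/c_0}$ is large. The power satisfies $|w^{\mu_K}|=(Tr)^{\mu_K}$ exactly on the whole semicircle, since $|\arg w|\le\pi/2$ and the principal branch is used. So everything reduces to the zeta factor.

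\emph{Zeta value at the saddle.} Here $\Re w_q(\theta_q)=Tr\cos\alpha$. By \eqref{eq:saddle-comparisons}, $Tr\cos\alpha\asymp Tr^2\asymp T/q\ge 1/c_0$. Choosing $c_0$ small forces $\Re(1/2+w)\ge4$. In that half-plane the Euler product gives $|\log\zeta_K(s)|\le N\sum_p\sum_k p^{-4k}/k\le C_K$. Thus $|\zeta_K|\asymp_K1$, which proves the first half of \eqref{eq:a-at-saddle}.

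\emph{Global ratio.} Elsewhere on the semicircle $\Re(1/2+w)\ge1/2$, and $\xi_K$-type convexity (Phragm\'en--Lindel\"of) bounds the zeta factor. The standard convexity bound gives $|\zeta_K(\sigma+it)|\ll_K(1+|t|)^{N/4+\varepsilon}$ for $\sigma\ge1/2$, and it is bounded for $\sigma\ge2$. The simple pole at $s=1$ is not met, since $|w|=Tr$ is large. With $|t|\le Tr\ll T$, this gives $|\zeta_K|\ll T^{N/4+1}$, say. The ratio of the power factors is exactly one, and the ratio of the $g_K$ factors is $O(1)$. Hence the ratio in \eqref{eq:a-at-saddle} is $O_K(T^{N})$; the exponent $N$ is generous.

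\emph{The local amplitude.} $f_q(0)=1$ holds trivially. For the $C^1$ bound I would write
\[
 \frac{d}{dy}\log a_q(\theta_q+\ell_qy)=\ell_q\,\frac{i w}{2}\Bigl(\frac{\mu_K}{w}+\frac{\zeta_K'}{\zeta_K}\bigl(\tfrac12+w\bigr)+(\log g_K)'(w)\Bigr).
\]
The first term is $O(\ell_q)$ and the third is $O(\ell_q/|w|)$. For the middle term I need $\Re(1/2+w)\ge4$ to persist on the local interval. This holds because $\ell_q\le\epsilon_0\cos\alpha$, so the argument of $w$ moves by at most $\epsilon_0\cos\alpha/2$. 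Then $\Re w\ge Tr(\cos\alpha-\epsilon_0\cos\alpha)\gg T/q$, and choosing $c_0$ small again keeps us in the half-plane. There $\zeta_K'/\zeta_K=O_K(2^{-\Re s})$, so the middle term is $O(\ell_q\,Tr\,2^{-Tr\cos\alpha})$. Since $\cos\alpha\asymp r$, this is bounded by $\ell_q\,X2^{-cX}$ with $X=Tr$, hence $O(1)$.

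Thus $|(\log f_q)'|\le C$ on $[-1,1]$. Integrating from $0$ gives $|\log f_q|\le C$, so $|f_q|\le e^C$ and $|f_q'|\le Ce^C$. This bounds $\|f_q\|_{C^1}$.

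The main obstacle is the uniform half-plane control near the saddle when $q$ is as large as $c_0T$. There the factor $Tr\cos\alpha$ is only of size $1/c_0$, and that is exactly why $c_0$ must be chosen small. One must also check that the $\ell_q$-neighborhood does not leave $\Re s\ge4$, which the choice $\ell_q\le\epsilon_0\cos\alpha$ secures.
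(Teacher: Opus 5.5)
Your route matches the paper's. You factor $a_q$ and use the Euler product in $\Re s\ge4$ at and near the saddle. For the global ratio you use a polynomial bound for $\zeta_K$ on $\Re s\ge1/2$, citing convexity where the paper proves the weaker bound $|\zeta_K(s)|\le C_K|s|^N$ by Phragm\'en--Lindel\"of. You finish with logarithmic differentiation for $f_q$. Two estimates, however, are wrong as written and need repair.

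\textbf{The global exponent.} Your crude bound $|\zeta_K|\ll T^{N/4+1}$ does not give a ratio of size $O(T^N)$ when $N=1$, since $5/4>1$. That is exactly the case $K=\mathbb Q$. Keep the convexity exponent $N/4+\varepsilon$ with $\varepsilon\le 3/4$, or use $|s|^N$ as the paper does.

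\textbf{The middle term in the derivative.} The reduction of the $\zeta_K'/\zeta_K$ term to $\ell_qX2^{-cX}$ with $X=Tr$ is false. On the local interval $\Re w\asymp Tr\cos\alpha\asymp Tr^2\asymp T/q$, not $\asymp Tr$. At $q\asymp c_0T$ these differ drastically: $Tr^2\asymp1/c_0$ while $Tr\asymp\sqrt{T/c_0}$. This also contradicts your own closing remark that $Tr\cos\alpha$ is only of size $1/c_0$ there.

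The correct estimate must use the factor $\ell_q$. Since $\ell_q\le\epsilon_0\cos\alpha\ll r$, you get $|dw/dy|=\tfrac12\ell_qTr\ll Tr^2\ll T/q$. Hence the middle term is $\ll (T/q)\,2^{-cT/q}=O(1)$ uniformly. This is the paper's estimate $\tfrac12\ell_qTr\le CT/q$ paired with $\Re w\ge cT/q$.

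The factor $\ell_q$ is essential, not cosmetic. If you only used $\ell_q\le C$, you would be left with $Tr\,2^{-cTr^2}$. At $q=c_0T$ this is of size $\sqrt{T/c_0}\,2^{-c/c_0}$, which is unbounded as $T\to\infty$. The point of the step is that the speed of $w$ along the local interval exactly matches the exponential decay rate of $\zeta_K'/\zeta_K$ at real part $\asymp T/q$. With these two fixes, the rest of your argument goes through.
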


\begin{proof}

We first record a polynomial bound on the whole semicircle:
\begin{equation}\label{eq:global-zeta}
 |\zeta_K(s)|\le C_K|s|^N
 \qquad(\Re s\ge1/2,\ |s|\ge2).
\end{equation}
This is weaker than the standard convexity bound
\cite[Proposition~3.8]{HSW}. For completeness, it also follows
directly from the functional equation and Stirling's formula.
On $\Re s=-1/2$ they give
$\zeta_K(s)\ll_K(1+|s|)^N$, while the Euler product bounds
$\zeta_K(s)$ on $\Re s=3/2$. Apply the Phragm\'en--Lindel\"of
principle in the intervening strip to the holomorphic function
\[
 \frac{(s-1)\zeta_K(s)}{(s+2)^{N+1}}.
\]
The order-one bound for $\xi_K$ and the reciprocal gamma factors
give finite-order growth in this strip. The displayed function is
bounded on both boundary lines, so the principle bounds it
throughout the strip. On $\Re s\ge1/2$,
$|s|\ge2$, we have $|s-1|\ge|s|/2$ and $|s+2|\le2|s|$.
This proves \eqref{eq:global-zeta} up to $\Re s=3/2$.
For larger real parts, the Euler product gives the same bound.

For the local bounds, observe that the angular displacement of
$w_q$ on $|\theta-\theta_q|\le\ell_q$ is at most
$\ell_q/2\le\epsilon_0\cos\alpha/2$. The cosine changes by at most
this displacement, so the real part of $w_q$ stays a fixed positive
proportion of its value at the saddle. Consequently, we have
\begin{equation}\label{eq:local-real-w}
 \Re w_q(\theta)\ge cTr\cos\alpha\ge cT/q.
\end{equation}
Take $c_0$ small enough that this implies
$\Re(1/2+w_q(\theta))\ge4$ on every local interval.
The absolutely convergent Euler product then bounds $|\zeta_K|$
above and below by positive constants through the inequalities
\[
 \zeta(\sigma)^{-N}\le |\zeta_K(\sigma+it)|
       \le\zeta(\sigma)^N\qquad(\sigma>1),
\]
because at most $N$ prime ideals lie above a rational prime and
each has norm at least that prime. The lower bound also follows
by bounding the Euler product for $1/\zeta_K$.
Together with $g_K=1+O_K((Tr)^{-1})$, this proves the first assertion
of \eqref{eq:a-at-saddle}.

On the whole lower semicircle, we have $\Re(1/2+w_q(\theta))\ge1/2$,
$|w_q(\theta)|=Tr$, and the modulus of $w_q(\theta)^{\mu_K}$ is
independent of $\theta$. Since $r\le r_e<4/\pi$, applying
\eqref{eq:global-zeta} and dividing by the saddle estimate proves
the second assertion of \eqref{eq:a-at-saddle}.

For the local derivative bound, we return to $\Re s\ge4$, where
absolute convergence gives
\[
 \left|\frac{\zeta_K'(s)}{\zeta_K(s)}\right|
 \le C_Ke^{-c\Re s}\qquad(\Re s\ge4).
\]
Indeed, logarithmic differentiation of the Euler product gives
coefficients bounded by $N$ times those of $-\zeta'/\zeta$,
because the sum of the residue degrees above a rational prime
is at most $N$.
For $\sigma=\Re s\ge4$, their absolute sum is at most
$N2^{-\sigma/2}\sum_{n\ge2}(\log n)n^{-2}$, which proves
the estimate. On the scaled interval, we also have
\[
 \left|\frac{d}{dy}w_q(\theta_q+\ell_qy)\right|
 =\frac12\ell_qTr\le C\frac Tq.
\]
Logarithmic differentiation of \eqref{eq:a-definition}, using
\eqref{eq:g-bounds} and \eqref{eq:local-real-w}, therefore gives
\[
 \left|\frac{d}{dy}\log a_q(\theta_q+\ell_qy)\right|
 \le C\left(1+\frac Tq e^{-cT/q}\right)\le C.
\]
The logarithm exists locally because every factor is nonzero.
Together with the bound $|f_q(y)|\asymp1$ already supplied by these
factors, the logarithmic derivative estimate gives
$|f_q'(y)|\le C$ and proves the $C^1$ assertion.
\end{proof}

\section{A uniform oscillatory formula}\label{sec:oscillation}

We combine the circle and amplitude estimates with a uniform
Gaussian approximation. The two conjugate saddle contributions
then give a real oscillatory formula for $A_d(-q)$.

\subsection{A complex Gaussian estimate}

Near the edge, the ratio $\Re D_q/|D_q|$ tends to zero by
\eqref{eq:D-bounds}, so the quadratic term becomes predominantly
oscillatory. An estimate based only on the absolute value of the
Gaussian loses uniformity in this regime. We instead normalize
by $|D_q|$ and use integration by parts to retain the cancellation
away from the saddle. The following lemma makes this argument
uniform even when the normalized quadratic coefficient approaches
the imaginary axis.

\begin{lemma}\label{lem:Gaussian}
Let $p\in C^3([-1,1];\C)$ and $f\in C^1([-1,1];\C)$ satisfy
\[
 p(0)=p'(0)=0,\qquad p''(0)=-A,\qquad
 |A|=1,\qquad \Re A>0,\qquad \Re p(y)\le0.
\]
Assume the bounds
\[
 |p'(y)|\ge c|y|,\qquad
 \|p\|_{C^3}+\|f\|_{C^1}\le C
\]
for fixed positive constants $c,C$. Then, for every real $n\ge1$, we have
\begin{equation}\label{eq:Gaussian-estimate}
 \int_{-1}^1f(y)e^{np(y)}\dd y
 =f(0)\sqrt{\frac{2\pi}{nA}}+O(n^{-3/5})
\end{equation}
uniformly in these functions and in $A$.
The square root is principal. When $f(0)=1$, the relative error is
$O(n^{-1/10})$.
\end{lemma}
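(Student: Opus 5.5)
The plan is to split the integral at a scale $\delta=n^{-\beta}$ with $\beta$ slightly less than $1/2$, to be chosen at the end; I expect $\beta=2/5$ to produce the stated exponent $3/5$. On $|y|\le\delta$, I will Taylor expand. Since $\|p\|_{C^3}\le C$, we have $p(y)=-Ay^2/2+O(|y|^3)$, and since $\|f\|_{C^1}\le C$, we have $f(y)=f(0)+O(|y|)$. Because $\Re p\le0$, we get $|e^{np}|\le1$. Hence
\[
 |e^{np(y)}-e^{-nAy^2/2}|\le n|y|^3\max(|e^{np}|,|e^{-nAy^2/2}|)\le n|y|^3,
\]
using that $\Re A>0$ and $|e^a-e^b|\le|a-b|\max(|e^a|,|e^b|)$. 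The local error is then bounded by
\[
 \int_{|y|\le\delta}\bigl(|y|+n|y|^3\bigr)\dd y\ll\delta^2+n\delta^4 .
\]
With $\delta=n^{-2/5}$ this is $O(n^{-3/5})$. The point is that I deliberately do not use Gaussian decay from $\Re A$, which may be tiny; only the modulus bound $\le1$ is used.

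Next, I will complete the Gaussian: $\int_{|y|\le\delta}e^{-nAy^2/2}\dd y=\sqrt{2\pi/(nA)}-2\int_\delta^\infty e^{-nAy^2/2}\dd y$. For the tail I integrate by parts, writing $e^{-nAy^2/2}=\frac{-1}{nAy}\frac{d}{dy}e^{-nAy^2/2}$. The boundary term is $O(1/(n\delta))$ because $|A|=1$ and $|e^{-nAy^2/2}|\le1$, and the remaining integral is $\int_\delta^\infty (nAy^2)^{-1}\dd y\ll1/(n\delta)$. This gives $O(n^{-3/5})$ uniformly in $A$, and the contour-free, principal-root identity for $\Re A>0$ is standard by analytic continuation from $A>0$.

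For $\delta\le|y|\le1$, I use the same integration by parts with $p'$ in place of the model. Write
\[
 fe^{np}=\frac{f}{np'}\frac{d}{dy}e^{np},
\]
which is legitimate since $|p'|\ge c|y|>0$ there. The boundary terms are $O(1/(n\delta))$ because $|e^{np}|\le1$. The remaining integrand is $n^{-1}(f/p')'e^{np}$, and
\[
 (f/p')'=f'/p'-fp''/p'^2=O(1/|y|+1/y^2).
\]
Its integral over $\delta\le|y|\le1$ is therefore $O(1/(n\delta))$ as well. This gives $O(n^{-3/5})$ again. Combining the three pieces proves \eqref{eq:Gaussian-estimate}. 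The relative statement then follows because $|\sqrt{2\pi/(nA)}|=\sqrt{2\pi/n}$ exactly, so dividing $n^{-3/5}$ by $n^{-1/2}$ gives $n^{-1/10}$.

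The main obstacle is uniformity as $\Re A\to0$, where the Gaussian does not decay in modulus. The plan handles this by never using decay: only $\Re p\le0$ and $\Re A>0$ (for $|e^{\cdot}|\le1$), together with integrations by parts that exploit $|A|=1$ and $|p'|\ge c|y|$. One must check that the balancing $\delta^2+n\delta^4\approx1/(n\delta)$ is optimized at $\delta=n^{-2/5}$, which it is, giving exactly $n^{-3/5}$.
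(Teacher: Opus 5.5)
Your proposal is correct and follows the paper's proof essentially step for step. It uses the same cutoff $n^{-2/5}$, the same bound $|e^{z_1}-e^{z_2}|\le|z_1-z_2|$ for exponents with nonpositive real part on the central interval, and a single integration by parts both for the outer pieces (using $|p'(y)|\ge c|y|$) and for the complex Gaussian tail (using $|A|=1$ and $\Re A>0$), with the relative estimate obtained by dividing by $\sqrt{2\pi/n}$.
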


\begin{proof}
We compare the integral with a Gaussian on
$[-\eta_n,\eta_n]$, where $\eta_n=n^{-2/5}$, and estimate the
remaining intervals by integration by parts. This cutoff balances
the cubic approximation error $n\eta_n^4$ with the tail bound
$(n\eta_n)^{-1}$, both of which have size $n^{-3/5}$.
Taylor's theorem gives $p(y)=-Ay^2/2+O(|y|^3)$.
To use this approximation uniformly, note that if $z_1,z_2$ have nonpositive real
parts, the identity
\[
 e^{z_1}-e^{z_2}=(z_1-z_2)
                \int_0^1e^{tz_1+(1-t)z_2}\dd t
\]
implies $|e^{z_1}-e^{z_2}|\le|z_1-z_2|$.
Applying this inequality with $z_1=np(y)$ and $z_2=-nAy^2/2$
shows that replacing the exponential on $[-\eta_n,\eta_n]$ changes the
integral by
\[
 O\!\left(n\int_{-\eta_n}^{\eta_n}|y|^3\dd y\right)=O(n^{-3/5}).
\]
Replacing $f(y)$ by $f(0)$ on the same interval contributes only
$O(\eta_n^2)=O(n^{-4/5})$.

To bound the remaining part on $[\eta_n,1]$, we integrate once by parts:
\[
 \int_{\eta_n}^1f e^{np}\dd y
 =\left[\frac{f e^{np}}{np'}\right]_{\eta_n}^1
 -\frac1n\int_{\eta_n}^1
       \left(\frac{f'}{p'}-\frac{fp''}{(p')^2}\right)e^{np}\dd y.
\]
The boundary terms are $O((n\eta_n)^{-1})$. Since $|e^{np}|\le1$,
the remaining integral term has modulus at most
\[
 \frac Cn\int_{\eta_n}^1(y^{-1}+y^{-2})\dd y
 =O((n\eta_n)^{-1})=O(n^{-3/5}).
\]
The same argument applies to $[-1,-\eta_n]$.

For $\Re A>0$, the complex Gaussian formula gives
$\int_\R e^{-nAy^2/2}\dd y=\sqrt{2\pi/(nA)}$.
This identity follows from the real Gaussian formula by analytic
continuation in the right half-plane. Its tail from $\eta_n$ to infinity
is uniformly $O((n\eta_n)^{-1})$: integration by parts using
$(e^{-nAy^2/2})'=-nAy e^{-nAy^2/2}$ gives a boundary term of
modulus at most $(n\eta_n)^{-1}$ and an integral bounded by
$n^{-1}\int_{\eta_n}^\infty y^{-2}\dd y$. The boundary at infinity
vanishes because the real part of $A$ is positive, and the same
bound holds for the negative tail.
Combining these bounds with the central approximation proves
\eqref{eq:Gaussian-estimate}, and division by the leading modulus
$\sqrt{2\pi/n}$ when $f(0)=1$ gives the relative estimate.
\end{proof}

\subsection{The two saddle contributions}

The local coordinate $\theta=\theta_q+\ell_qy$ gives the normalized
integrand $f_q(y)e^{n_qp_q(y)}$, where we define
\begin{equation}\label{eq:nq-definition}
 n_q=d|D_q|\ell_q^2.
\end{equation}
This parameter measures the size of the quadratic term across the
local interval and controls the error in the following approximation.

\begin{proposition}\label{prop:lower-contribution}
Let $L_d(q)$ be the contribution of the lower semicircle to
\eqref{eq:Cauchy} with $x=-q$ and radius $r^2$. Define
\begin{equation}\label{eq:Z-definition}
 Z_d(q)=\frac{d!}{d^d}\,
        \frac{a_q(\theta_q)e^{dh_q(u_q)}}{\sqrt{2\pi dD_q}},
\end{equation}
where the square root is principal. The leading term $Z_d(q)$ is
nonzero, and the lower semicircle integral satisfies
\begin{equation}\label{eq:lower-estimate}
 L_d(q)=Z_d(q)
 \left\{1+O(n_q^{-1/10})+
 O_K\!\left(d^{N+1}e^{-c_gd\min\{(q-q_e)^2,q^{-1}\}}\right)\right\}
\end{equation}
uniformly for $q_e<q\le c_0T$ with $n_q\ge1$.
If $q_e+\tau\le q\le Q\le c_0T$, with $\tau>0$ sufficiently small
and $Q$ sufficiently large, then we have
\begin{equation}\label{eq:nq-lower}
 n_q\ge c\min\{d\tau^{3/2},d/Q\}.
\end{equation}
\end{proposition}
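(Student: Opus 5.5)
Parametrize the lower semicircle by $u=r^2e^{i\theta}$ with $-\pi\le\theta\le0$. The integral over the clockwise-reversed piece then becomes
\[
 L_d(q)=\frac{d!}{2\pi i\,d^d}\int_{-\pi}^{0}a_q(\theta)e^{dh_q(r^2e^{i\theta})}\,i\dd\theta
 =\frac{d!}{2\pi d^d}\int_{-\pi}^0 a_q(\theta)e^{dh_q(r^2e^{i\theta})}\dd\theta ,
\]
by the factorization \eqref{eq:integrand-factorization} with $x=-q$ and the substitution $du/u^{d+1}=i u^{-d}\dd\theta$. The factor $u^{-d}$ is absorbed into $e^{-d\Log u}$. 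Split the $\theta$-range into the local interval $|\theta-\theta_q|\le\ell_q$ and its complement. This is legitimate because, by Corollary~\ref{cor:local-geometry}, the local interval lies inside $(-\pi,0)$.

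On the local interval, substitute $\theta=\theta_q+\ell_q y$. The local piece equals
\[
 \frac{d!}{2\pi d^d}\,a_q(\theta_q)e^{dh_q(u_q)}\,\ell_q\int_{-1}^1 f_q(y)e^{n_qp_q(y)}\dd y .
\]
Here $n_q=d|D_q|\ell_q^2$, by \eqref{eq:p-definition} and \eqref{eq:nq-definition}.

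Corollary~\ref{cor:local-geometry} and Lemma~\ref{lem:amplitude-bounds} verify the hypotheses of Lemma~\ref{lem:Gaussian}, with $A=D_q/|D_q|$ and $\Re A>0$ by Lemma~\ref{lem:Hessian}, and with constants independent of $q$ and $d$. Apply that lemma with $n=n_q\ge1$ and $f_q(0)=1$. This gives $\sqrt{2\pi/(n_qA)}\,(1+O(n_q^{-1/10}))$. Multiplying by $\ell_q$ yields
\[
 \ell_q\sqrt{\frac{2\pi|D_q|}{d|D_q|\ell_q^2D_q}}=\sqrt{\frac{2\pi}{dD_q}},
\]
and the principal branches are consistent because $\Re A>0$. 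Combined with $\frac{1}{2\pi}$, this produces exactly $Z_d(q)$ times $1+O(n_q^{-1/10})$. Nonvanishing of $Z_d(q)$ follows since $a_q(\theta_q)\ne0$ by \eqref{eq:a-at-saddle} and $D_q\ne0$.

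On the complement, bound the integrand by $|a_q(\theta)|e^{d\Re h_q}$. Using \eqref{eq:discard-gap} and the second part of \eqref{eq:a-at-saddle}, the complement contributes at most
\[
 \frac{d!}{d^d}\,C_KT^N|a_q(\theta_q)|e^{d\Re h_q(u_q)}e^{-c_gd\min\{(q-q_e)^2,q^{-1}\}} .
\]
Divide by $|Z_d(q)|$. The ratio picks up the factor $\sqrt{2\pi d|D_q|}\le Cd^{1/2}$, since $|D_q|$ is bounded. With $T\ll d$, the total relative error is $O_K(d^{N+1/2}e^{-c_gd\min\{\cdot\}})$, which is within the stated $d^{N+1}$. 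This proves \eqref{eq:lower-estimate}.

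For \eqref{eq:nq-lower}, use $|D_q|\asymp\sin\alpha$ and $\ell_q=\epsilon_0\min\{\alpha,\cos\alpha\}$, so that $n_q\asymp d\sin\alpha\min\{\alpha,\cos\alpha\}^2$. Split into three ranges of $q$.
\begin{itemize}
 \item \emph{Near the edge.} Suppose $q_e+\tau\le q\le q_e+t_0$. By \eqref{eq:edge-comparison}, $\alpha\asymp (q-q_e)^{1/2}\ge c\tau^{1/2}$, so $n_q\gg d\alpha^3\ge cd\tau^{3/2}$.
 \item \emph{Large $q$.} Suppose $q_1\le q\le Q$. Then $\sin\alpha\asymp1$ and $\cos\alpha\asymp q^{-1/2}$, so $n_q\gg d/q\ge d/Q$.
 \item \emph{Intermediate range.} On $q_e+t_0\le q\le q_1$, as in the proof of Corollary~\ref{cor:local-geometry}, $\sin\alpha$ and $\ell_q$ are bounded below by absolute constants. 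Hence $n_q\gg d$, which dominates both quantities once $\tau\le t_0$ and $Q\ge q_1$.
\end{itemize}

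The main obstacle is bookkeeping rather than analysis: checking that all hypotheses of Lemma~\ref{lem:Gaussian} hold with uniform constants, and that the Gaussian normalization exactly reproduces $\sqrt{2\pi dD_q}$ with the principal branch.
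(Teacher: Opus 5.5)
Your proposal is correct and follows the paper's proof essentially step for step. You use the same parametrization $u=r^2e^{i\theta}$ and the same split into the local interval, handled by Lemma~\ref{lem:Gaussian} with $A=D_q/|D_q|$, and its complement, handled by \eqref{eq:discard-gap} and \eqref{eq:a-at-saddle}, followed by the same three-range analysis of $n_q$; your explicit checks of the principal branch and of $Z_d(q)\neq0$, and your sharper tail factor $d^{N+1/2}$, are harmless refinements.
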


\begin{proof}
Parameterizing the lower semicircle by $u=r^2e^{i\theta}$,
$-\pi\le\theta\le0$, gives $\dd u/u=i\dd\theta$.
Equation \eqref{eq:integrand-factorization} therefore gives
\[
 L_d(q)=\frac{d!}{2\pi d^d}
        \int_{-\pi}^0a_q(\theta)e^{dh_q(r^2e^{i\theta})}\dd\theta.
\]
On $|\theta-\theta_q|\le\ell_q$, substitute
$\theta=\theta_q+\ell_qy$. The integrand becomes
\[
 a_q(\theta_q)e^{dh_q(u_q)}f_q(y)e^{n_qp_q(y)}.
\]
Corollary~\ref{cor:local-geometry} and
Lemma~\ref{lem:amplitude-bounds} verify every hypothesis of
Lemma~\ref{lem:Gaussian}, with $A=D_q/|D_q|$.
After multiplying by $\ell_qd!/(2\pi d^d)$, its leading term is
exactly $Z_d(q)$, and its relative error is $O(n_q^{-1/10})$.

The remainder of the lower semicircle has length at most $\pi$ in
the angular variable. By \eqref{eq:discard-gap} and
\eqref{eq:a-at-saddle}, its absolute contribution is at most
\[
 C_K\frac{d!}{d^d}|a_q(\theta_q)|T^Ne^{d\Re h_q(u_q)}
             e^{-c_gd\min\{(q-q_e)^2,q^{-1}\}}.
\]
Dividing by $|Z_d(q)|$ gives the bound
\[
 C_KT^N\sqrt{d|D_q|}\,
        e^{-c_gd\min\{(q-q_e)^2,q^{-1}\}}
 \le C_Kd^{N+1}e^{-c_gd\min\{(q-q_e)^2,q^{-1}\}},
\]
because $|D_q|\le C$ and $T\le d$ for all large $d$.
This proves \eqref{eq:lower-estimate}.

Near the edge, the bounds $|D_q|\asymp\alpha$ and
$\ell_q\asymp\alpha$ give $n_q\asymp d(q-q_e)^{3/2}$.
For $q$ above a fixed large constant, the bounds $|D_q|\asymp1$
and $\ell_q\asymp q^{-1/2}$ give $n_q\asymp d/q$.
On the intervening fixed interval, the positive lower bounds in
the proof of Corollary~\ref{cor:local-geometry} give $n_q\asymp d$.
Combining these ranges proves \eqref{eq:nq-lower}.
\end{proof}

We now choose an interval on which both errors tend to zero
uniformly. Fix $\epsilon_0$ as in
Corollary~\ref{cor:local-geometry}, and hence fix $c_g$.
Choose $M>0$ with $c_gM^2>N+4$. Decrease $c_0>0$, if
necessary, so that Lemma~\ref{lem:amplitude-bounds} holds and
$c_gN/(4\pi c_0)>N+4$. Put
\begin{equation}\label{eq:window}
 \tau=M\sqrt{\frac{\log d}{d}},\qquad Q=c_0T,
 \qquad I_d=[q_e+\tau,Q].
\end{equation}
For sufficiently large $d$ this interval is nonempty, and the bound
$\Lambda\ge\log d$ and the chosen constants give
\[
 c_gd\tau^2>(N+4)\log d,
 \qquad
 \frac{c_gd}{Q}=\frac{c_gN\Lambda}{4\pi c_0}>(N+4)\log d.
\]
Together with $n_q\gg\log d$ from \eqref{eq:nq-lower}, these
inequalities make both errors in \eqref{eq:lower-estimate} tend
to zero uniformly. At $q=Q$, the effective parameter satisfies
$n_Q\asymp d/Q\asymp\log d$, which accounts for the logarithmic
error below. We do not optimize this rate, since an error of
size $o(|Z_d(q)|)$, uniform on $I_d$, suffices to count sign
changes.
Adding the conjugate upper contribution gives the oscillatory
formula we need.

\begin{corollary}\label{cor:oscillatory}
Uniformly for $q\in I_d$, we have
\begin{equation}\label{eq:oscillatory}
 A_d(-q)=2\Re Z_d(q)+O\!\left(|Z_d(q)|(\log d)^{-1/10}\right).
\end{equation}
\end{corollary}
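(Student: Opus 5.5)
The argument splits the Cauchy integral \eqref{eq:Cauchy} for $A_d(-q)$ on the circle $|u|=r^2$ into its lower and upper semicircles. The lower one is $L_d(q)$, handled by Proposition~\ref{prop:lower-contribution}; the upper one we identify with $\overline{L_d(q)}$ by a symmetry argument. Then $A_d(-q)=L_d(q)+\overline{L_d(q)}=2\Re L_d(q)$, and inserting \eqref{eq:lower-estimate} gives
\[
 A_d(-q)=2\Re Z_d(q)+O\bigl(|Z_d(q)|\,E_d(q)\bigr),
\]
where $E_d(q)$ collects the two relative errors in \eqref{eq:lower-estimate}. It remains to show that $E_d(q)\ll(\log d)^{-1/10}$ uniformly for $q\in I_d$.

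\emph{Symmetry.} Since $F$ is real entire, $F(B\bar u)=\overline{F(Bu)}$. Also $x=-q$ is real and $d$ is an integer, so the integrand $g(u)=F(Bu)e^{-dqu}u^{-d-1}$ satisfies $g(\bar u)=\overline{g(u)}$. Parametrize the upper semicircle by $u=r^2e^{i\phi}$ with $0\le\phi\le\pi$, so that $\dd u=iu\,\dd\phi$, and substitute $\phi=-\theta$. Reflecting the contour, and keeping track of the orientation reversal and the factor $i$, turns the upper integral $\frac{1}{2\pi i}\int_{\mathrm{upper}}g\,\dd u$ into the complex conjugate of the lower one. Multiplying by the real constant $d!/d^d$ then gives exactly $\overline{L_d(q)}$. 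This is also consistent with the conjugate branch conventions used in the factorization \eqref{eq:integrand-factorization}, and $A_d(-q)$ is real, as it must be.

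\emph{Uniform errors on $I_d$.} We check the three conditions $q\in I_d$ needs for Proposition~\ref{prop:lower-contribution}: $q>q_e$ since $\tau>0$, $q\le c_0T$, and $n_q\ge1$. Applying \eqref{eq:nq-lower} with $\tau=M\sqrt{\log d/d}$ and $Q=c_0T$ gives
\[
 n_q\gg\min\{d\tau^{3/2},\,d/Q\}.
\]
Here $d\tau^{3/2}=M^{3/2}d^{1/4}(\log d)^{3/4}$, and $d/Q=N\Lambda/(4\pi c_0)\asymp\log d$ by \eqref{eq:T-identities} and \eqref{eq:T-comparisons}. Hence $n_q\gg\log d$, which exceeds $1$ for large $d$, and the first error satisfies $O(n_q^{-1/10})=O((\log d)^{-1/10})$.

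For the exponential error we split according to which term attains $\min\{(q-q_e)^2,q^{-1}\}$.
\begin{itemize}
\item[(i)] If the minimum is $(q-q_e)^2$, then $(q-q_e)^2\ge\tau^2$, so $c_gd(q-q_e)^2\ge c_gd\tau^2=c_gM^2\log d>(N+4)\log d$.
\item[(ii)] If the minimum is $q^{-1}$, then $q\le Q$ gives $c_gd/q\ge c_gd/Q=c_gN\Lambda/(4\pi c_0)>(N+4)\log d$, using $\Lambda\ge\log d$ and the choice of $c_0$.
\end{itemize}
In both cases $d^{N+1}e^{-c_gd\min\{\cdot\}}\le d^{N+1}d^{-(N+4)}=d^{-3}$. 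This is far smaller than $(\log d)^{-1/10}$, which completes the argument.

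The main obstacle is the sign and orientation bookkeeping in the symmetry step: the reflected integral must come out as exactly $\overline{L_d(q)}$, not its negative. The remaining steps are direct substitutions of the constants fixed in \eqref{eq:window}, and the uniformity comes from the fact that all constants in Proposition~\ref{prop:lower-contribution} are independent of $q$.
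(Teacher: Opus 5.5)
Your proposal is correct and follows essentially the same route as the paper. The upper semicircle contributes exactly $\overline{L_d(q)}$ by conjugate symmetry of the single-valued Cauchy integrand. On $I_d$, the choices of $M$ and $c_0$ make the exponential error $O(d^{-3})$, and \eqref{eq:nq-lower} gives $n_q\gg\log d$, hence the $(\log d)^{-1/10}$ relative error; your orientation check and case split only spell out what the paper states briefly.
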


\begin{proof}
On $I_d$, the exponential error in \eqref{eq:lower-estimate}
is $O(d^{-3})$, and $n_q\gg\log d$ bounds the remaining relative
error as claimed. The upper semicircle contributes exactly
$\overline{L_d(q)}$ because $F(\overline u)=\overline{F(u)}$
and $q$ is real. The Cauchy integrand is single-valued, while
\eqref{eq:integrand-factorization} uses conjugate branches on
the two semicircles. Adding their contributions proves
\eqref{eq:oscillatory}.
\end{proof}

\section{Counting the real zeros}\label{sec:counting}

The oscillatory formula determines the sign of $A_d(-q)$ when the
phase reaches an integer multiple of $\pi$. To count these points,
we show that the phase differs from $dS(q)$ by a bounded continuous
correction and then use the endpoint estimates for $S$.

\begin{lemma}\label{lem:phase-correction}
On $I_d$, we can write $Z_d(q)=|Z_d(q)|e^{i\vartheta_d(q)}$,
where $\vartheta_d$ is a continuous real function satisfying
\begin{equation}\label{eq:phase-correction}
 \vartheta_d(q)=dS(q)+\omega_d(q),\qquad |\omega_d(q)|\le C.
\end{equation}
\end{lemma}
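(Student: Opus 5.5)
The phase of $Z_d(q)$ splits into four factors, read off from \eqref{eq:Z-definition}: the positive constant $d!/d^d$, the exponential $e^{dh_q(u_q)}$, the amplitude $a_q(\theta_q)$, and the reciprocal square root $(2\pi dD_q)^{-1/2}$. The exponential contributes exactly $d\,\Im h_q(u_q)=dS(q)$ to the argument. I will therefore set
\[
 \omega_d(q)=\arg a_q(\theta_q)-\tfrac12\Arg D_q,
\]
with a continuous choice of $\arg a_q(\theta_q)$ on $I_d$, and put $\vartheta_d=dS+\omega_d$. Then $Z_d(q)=|Z_d(q)|e^{i\vartheta_d(q)}$ holds automatically. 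What remains is to show that $\vartheta_d$ is continuous and that $\omega_d$ is bounded uniformly in $d$ and in $q\in I_d$.

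Continuity comes first. By Lemma~\ref{lem:saddle-parameters}, the parameters $r$ and $\alpha$ depend continuously on $q$ through the implicit function theorem. Hence $u_q$, $h_q(u_q)$, $S(q)$, $D_q$, and $a_q(\theta_q)$ are all continuous in $q$.

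The square-root factor is bounded because $\Re D_q>0$ by \eqref{eq:D-bounds}. So $\Arg D_q\in(-\pi/2,\pi/2)$ is continuous, and its contribution to $\omega_d$ has modulus at most $\pi/4$.

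The amplitude needs more care, and I expect it to be the main obstacle. The danger is that $\arg a_q(\theta_q)$ might drift without bound as $q$ ranges over the growing interval $I_d$. Write $w=w_q(\theta_q)=Tre^{-i\alpha}$, which has $|\arg w|\le\pi/2$. I split $a_q(\theta_q)=c_Kw^{\mu_K}\zeta_K(\tfrac12+w)g_K(w)$ into its factors and bound the argument of each.
\begin{itemize}
\item The constant $c_K>0$ contributes nothing.
\item The power $w^{\mu_K}$, taken with the principal branch, has argument $-\mu_K\alpha$. This is bounded by $|\mu_K|\pi/2$ and is continuous.
\item The correction $g_K$ has a holomorphic logarithm with $\log g_K(w)=O_K(|w|^{-1})$ by \eqref{eq:g-bounds}, so its argument is $O_K(1)$.
\item For the zeta factor, $q\le c_0T$ gives $\Re(\tfrac12+w)\ge4$, exactly as in the proof of Lemma~\ref{lem:amplitude-bounds} via \eqref{eq:local-real-w}. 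In this half-plane the absolutely convergent Euler product defines $\log\zeta_K(s)=\sum_{\mathfrak p}\sum_k k^{-1}(\mathrm N\mathfrak p)^{-ks}$. Its modulus is at most $N\log\zeta(4)$, so $\Im\log\zeta_K$ is a continuous branch of the argument that is bounded by an absolute multiple of $N$.
\end{itemize}

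Summing these four pieces defines a continuous $\arg a_q(\theta_q)$ bounded by $C_K$. Together with the bound on the square-root term, this gives $|\omega_d|\le C$.

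The key point is that all branches are chosen globally on $I_d$ through holomorphic logarithms, namely the principal power, $\log g_K$, and the Euler-product logarithm. This rules out any accumulated winding, and it is precisely what the restriction $\Re s\ge4$ near the saddle buys.
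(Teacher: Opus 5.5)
Your proposal is correct and matches the paper's proof essentially step for step: the paper also sets $\omega_d(q)=-\mu_K\alpha+\Im\Log\zeta_K(\tfrac12+Tv_q)+\Im\log g_K(Tv_q)-\tfrac12\Arg D_q$. It bounds each term exactly as you do, using $0<\alpha<\pi/2$, the Euler-product bound $N\log\zeta(4)$ on $\Re s\ge4$ (guaranteed by \eqref{eq:local-real-w} and the choice of $c_0$), the estimate \eqref{eq:g-bounds}, and $\Re D_q>0$. Your explicit remark that $r$ and $\alpha$ vary continuously with $q$ (via the strict monotonicity in Lemma~\ref{lem:saddle-parameters}) only spells out a point the paper leaves implicit.
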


\begin{proof}
To choose the phase continuously, use the Euler-product logarithm
\[
 \Log\zeta_K(s)=\sum_{\mathfrak p}\sum_{m\ge1}
               \frac{(\mathrm N\mathfrak p)^{-ms}}m
 \qquad(\Re s>1),
\]
together with the logarithm of $g_K$ in
Lemma~\ref{lem:Stirling-amplitude} and the principal argument
of $D_q$. The positive real prefactor in
\eqref{eq:Z-definition} does not contribute to the phase, so with
$v_q=re^{-i\alpha}$ these choices give
\begin{equation}\label{eq:omega-formula}
 \omega_d(q)=-\mu_K\alpha
  +\Im\Log\zeta_K\!\left(\frac12+Tv_q\right)
  +\Im\log g_K(Tv_q)-\frac12\Arg D_q.
\end{equation}
These branch choices make every term continuous. Since
$0<\alpha<\pi/2$ and $\Re D_q>0$, both the first term and
$\Arg D_q$ are bounded. At the saddle arguments, the Euler-product
logarithm has absolute value at most $N\log\zeta(4)$ by
\eqref{eq:local-real-w} and the choice of $c_0$.
The remaining term is $O((Tr)^{-1})$ by \eqref{eq:g-bounds},
which proves the uniform bound in \eqref{eq:phase-correction}.
\end{proof}

The bounded correction allows us to count zeros from the net phase
change, without assuming that the corrected phase is monotone.

\begin{proof}[Proof of Theorem~\ref{thm:Dedekind}]
For all sufficiently large $d$, Corollary~\ref{cor:oscillatory} and
Lemma~\ref{lem:phase-correction} give
\[
 A_d(-q)=2|Z_d(q)|
  \left(\cos\vartheta_d(q)+O((\log d)^{-1/10})\right)
 \qquad(q\in I_d).
\]
The normalized error is then uniformly smaller than $1/2$.
By Lemma~\ref{lem:action}, the phase change satisfies
\begin{align*}
 \vartheta_d(Q)-\vartheta_d(q_e+\tau)
 &=d\bigl(S(Q)-S(q_e+\tau)\bigr)+O(1)\\
 &\ge \pi d-Cd\bigl(Q^{-1/2}+\tau^{3/2}\bigr)-C.
\end{align*}
Let $\Delta_d$ denote this positive phase change. There are at
least $\Delta_d/\pi-1$ integer multiples of $\pi$ between the
endpoint values. We can choose points of $I_d$ in increasing order
at which these consecutive values are attained: after each chosen
point, the intermediate value theorem on the remaining interval
supplies the next one. At these points the cosine equals $1$ and
$-1$ alternately, and the error bound preserves the signs of
$A_d(-q)$. Each intervening open interval therefore contains a
distinct zero. There are at least $\Delta_d/\pi-2$ such intervals,
which gives at least
\begin{equation}\label{eq:root-count-before-scale}
 d-Cd\bigl(Q^{-1/2}+\tau^{3/2}\bigr)-C
\end{equation}
distinct real zeros of $q\mapsto A_d(-q)$ in $I_d$.

The choices in \eqref{eq:window} and
\eqref{eq:T-comparisons} give
\[
 dQ^{-1/2}=O(\sqrt{d\log d}),\qquad
 d\tau^{3/2}=O\!\left(d^{1/4}(\log d)^{3/4}\right)
             =O(\sqrt{d\log d}).
\]
Thus $A_d$ has at least $d-C\sqrt{d\log d}$ distinct negative
real zeros. The correspondence \eqref{eq:root-dictionary} gives the same
number of distinct negative zeros of $J_d$. For all sufficiently
large $d$, Lemma~\ref{lem:positive-growth} ensures that $J_d$ is
nonzero and has degree at most $d$. It consequently has at most
$C\sqrt{d\log d}$ nonreal zeros, counted with multiplicity.
Since $K$ was arbitrary and all constants are independent of $d$,
this proves the theorem.
\end{proof}

\begin{remark}
The argument also locates the real zeros that it counts. Under
\eqref{eq:root-dictionary}, the interval $I_d$ corresponds to
\[
 \left[-\frac{T^2}{d(q_e+\tau)},-\frac{T^2}{dQ}\right].
\]
Since $q_e+\tau\asymp1$, $Q=c_0T$, and $T\asymp d/\log d$,
there are constants $C_1,c_1>0$, depending on $K$, such that this interval
is contained in
\[
 \left[-C_1\frac{d}{(\log d)^2},-\frac{c_1}{\log d}\right]
\]
for all sufficiently large $d$. This locates the
$d-C\sqrt{d\log d}$ distinct zeros of $J_d$ in this range.
\end{remark}

\section*{Appendix: AxiomProver Lean certificate and AI declaration}
\label{app:lean-certificate}

Theorem~\ref{thm:Dedekind} has been formalized in Lean~4 using
AxiomProver. 
This work also establishes
the Riemann case of Theorem~\ref{thm:main} for every $d\ge2$.
The formal definitions use the normalization of the completed
Dedekind zeta-function in \eqref{eq:xi-K-definition} and the
Jensen polynomials in \eqref{eq:FK-JK-definition}.
The Dedekind theorem assumes only that $K$ is a number field.
It assumes neither GRH nor the nonvanishing of
$\zeta_K(1/2)$. The formal development supplies the analytic
and geometric inputs required for the proof, including the
completed Dedekind zeta-function and its functional equation,
the Gamma and zeta estimates, the saddle-point analysis,
and the final sign-change and root-counting argument.

We use Lean~4.33.1 and
\href{https://github.com/leanprover-community/mathlib4}{Mathlib},
the Lean mathematical library, at revision
\href{https://github.com/leanprover-community/mathlib4/commit/0df444a360eaa60ab8c11dca51a86af692955474}
{\texttt{0df444a360ea}}.
It builds on the existing Riemann Jensen-polynomial
formalization in the same repository.
The completed Dedekind-zeta construction includes a port of
a formal proof from
\href{https://github.com/anthropics/fermats-last-theorem}
{\texttt{anthropics/fermats-last-theorem}}
at revision
\href{https://github.com/anthropics/fermats-last-theorem/tree/6e837e75355538c7f80bab5b956861e86c4eacc2}
{\texttt{6e837e753555}}.
The relevant proof source and its supporting utility are
included in this repository and compiled as part of the
development. The file \texttt{lake-manifest.json} records
the pinned revisions of the software dependencies.
The formal statements, proof sources, dependency blueprint,
and instructions for checking the proofs are available at
\begin{center}
\url{https://github.com/AxiomMath/RiemannJensen}.
\end{center}

This paper was written through human--AI collaboration with
AxiomProver. The project arose from the author's goal of
extending his earlier joint works on shifted Jensen polynomials
\cite{GORZ,GORTTW} to the unshifted setting.
The author developed the general strategy for the mathematical
proof through renormalization and the Cauchy integral formula, where Hermite polynomials in the shifted setting are replaced by $\cos$. 
AxiomProver carried out the detailed calculations, optimized
parameter choice and interval selection to complete the proof. Finally, AxiomProver
produced the Lean formalization. 
The author reviewed the argument and is accountable for the
mathematical statements, exposition, and conclusions presented
here. ChatGPT assisted with editing the manuscript.

\end{document}